\documentclass[a4paper]{amsart}
\usepackage{amssymb,latexsym}
\usepackage[utf8x]{inputenc}
\usepackage{amsmath,amsthm}
\usepackage{amsfonts,mathrsfs}
\usepackage{cleveref}
\usepackage{enumerate,units}
\usepackage[all]{xy}
\usepackage{graphicx}

\theoremstyle{plain}
\newtheorem{theorem}{Theorem}[section]
\newtheorem{corollary}[theorem]{Corollary}
\newtheorem{lemma}[theorem]{Lemma}
\newtheorem{proposition}[theorem]{Proposition}

\newtheorem{fact}[theorem]{Fact}
\newtheorem{hypothesis}{Hypothesis}
\newtheorem*{claim}{Claim}
\newtheorem*{theorem*}{Theorem}

\theoremstyle{remark}
\newtheorem{remark}[theorem]{Remark}

\newtheorem{question}[theorem]{Question}

\theoremstyle{definition}
\newtheorem{definition}[theorem]{Definition}

\numberwithin{equation}{section}
\newcommand{\forkindep}[1][]{%
  \mathrel{
    \mathop{
      \vcenter{
        \hbox{\oalign{\noalign{\kern-.3ex}\hfil$\vert$\hfil\cr
              \noalign{\kern-.7ex}
              $\smile$\cr\noalign{\kern-.3ex}}}
      }
    }\displaylimits_{#1}
  }
}

\newenvironment{claimproof}[1][\proofname]
  {%
    \proof[#1]%
  }
  {%
    \endproof%
  }

\newcounter{step}                   
    {\hfill $\clubsuit$             
     \vspace{7pt}\par}
\newcommand{\primef}{k}  
\newcommand{\chr}{\mathrm{char}}
  
\begin{document}
\title{A Generalization of von Staudt's Theorem on Cross-Ratios}
\author{Yatir Halevi and Itay Kaplan}

\thanks{The first author was partially supported by the  European Research Council grant 338821. The second author would like to thank the Israel Science Foundation for their support of this research (Grant no. 1533/14).}
\address{Einstein Institute of Mathematics, The Hebrew University of Jerusalem, Givat Ram 9190401, Jerusalem, Israel}
\email{yatir.halevi@mail.huji.ac.il}
\email{kaplan@math.huji.ac.il}
\keywords{cross ratio; von Staudt; projective semilinear group}
\subjclass[2010]{14N99; 20B27; 20E28}

\begin{abstract}
A generalization of von Staudt's theorem that every permutation of the projective line that preserves harmonic quadruples is a projective semilinear map is given. It is then concluded that any proper supergroup of permutations of the projective semilinear group over an algebraically closed field of transcendence degree at least $1$ is 4-transitive.
\end{abstract}

\maketitle
\section{Introduction}
In his book \emph{Geometrie der Lage} (see \cite{vs}), first appearing in 1847, Karl Georg Christian von Staudt wanting to establish (real) projective geometry on an axiomatic approach defined a projectivity to be a permutation of the projective line $\mathbb{P}(\mathbb{R})=\mathbb{R}\cup\{\infty\}$ preserving \emph{harmonic quadruples}, i.e. quadruples of distinct elements having cross-ratio $-1$ (which can be defined strictly geometrically), where the cross-ratio of quadruple of distinct elements is
\[[a,b;c,d]=\frac{c-a}{c-b}\cdot \frac{d-b}{d-a}=-1.\]

He proved that a projectivity is a composition of a finite number of perspectivities (which are basic geomtric maps). It was noticed later that there is a small gap in von Staudt's reasoning, see \cite{coolidge, voelke} for a detailed historical background. 

Given a field $F$, a \emph{projectivity} (also known as a homography or a fractional linear transformation) of the projective line $\mathbb{P}(F)=F\cup \{\infty\}$ is an element of the group
\[PGL_2(F)=\left\{\frac{ax+b}{cx+d}: a,b,c,d\in F,\, ad-bc\neq 0\right\},\]
where the usual conventions when dealing with $\infty,0$ and fractions apply here. It is easy to see that projectivities preserve cross-ratios.

It was Schreier and Sperner who first proved in \cite[page 191]{sperner} that every permutation of $F\cup\{\infty\}$, where $F$ is a field of characteristic $\neq 2$, preserving harmonic quadruples, i.e. \[[a,b;c,d]=-1 \Longrightarrow [f(a),f(b);f(c),f(d)]=-1,\] is an element of 
\[P\Gamma L_2(F)=\left\{\frac{ax^\sigma+b}{cx^\sigma+d}:a,b,c,d\in F,\, ad-bc\neq 0, \, \sigma\in Aut(F)\right\}.\] 
Any result in this spirit is now called a \emph{von Staudt theorem}. 

Over the years this theorem was generalized by relaxing the assumptions on $F$, for instance for $F$ a skew-field or a ring with some additional assumptions, see the introduction in \cite{havlicek} for a survey of results in that direction.

In this paper, we follow a generalization in a different direction. Hoffman eliminated the restriction on the characteristic of the field and replaced $-1$ with any field element which is fixed by $Aut(F)$, see \cite{hoffman}. Our main result is Corollary \ref{C:main} from the text:
\begin{theorem*}
Let $F$ be a field, $\primef$ its prime field and $\emptyset\neq O\subseteq F\setminus \{0,1\}$ which is $Aut(F)$-invariant. If
\begin{enumerate}
\item $\primef(O)\subsetneq F$ and
\item if $\chr(F)=2$ then $F$ is perfect and $|F|>4$,
\end{enumerate}
then the subgroup of permutations $f$ of $F\cup \{\infty\}$ satisfying \[[a,b;c,d]\in O\Longleftrightarrow [f(a),f(b);f(c),f(d)]\in O\] is exactly $P\Gamma L_2(F)$.

\end{theorem*}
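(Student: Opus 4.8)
Write $G$ for the group of permutations $f$ of $\mathbb{P}(F)=F\cup\{\infty\}$ appearing in the statement; it is a group because the defining condition is a two-sided equivalence. The plan is to prove the two inclusions $P\Gamma L_2(F)\subseteq G$ and $G\subseteq P\Gamma L_2(F)$. The first is the routine direction: every $p\in PGL_2(F)$ satisfies $[p(a),p(b);p(c),p(d)]=[a,b;c,d]$, while a field automorphism $\sigma$ sends $[a,b;c,d]$ to $[a,b;c,d]^\sigma$, so the $\operatorname{Aut}(F)$-invariance of $O$ gives $[a,b;c,d]\in O\Longleftrightarrow[a,b;c,d]^\sigma\in O$; composing, every element of $P\Gamma L_2(F)$ preserves the relation. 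For the hard inclusion I would exploit that $PGL_2(F)\subseteq G$ acts sharply $3$-transitively on $\mathbb{P}(F)$: given $f\in G$, choose $p\in PGL_2(F)$ with $p\circ f$ fixing $\infty,0,1$, so that $p\circ f$ lies in the stabilizer $G_{\infty,0,1}$. Since $PGL_2(F)\cdot\operatorname{Aut}(F)=P\Gamma L_2(F)$ and $G=PGL_2(F)\cdot G_{\infty,0,1}$ by transitivity on ordered triples, the whole theorem reduces to the single assertion $G_{\infty,0,1}=\operatorname{Aut}(F)$; equivalently, every $f\in G$ fixing $0,1,\infty$ is a field automorphism.

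Next I would extract the field structure by specializing one argument of the cross-ratio to $\infty$. Using $[\infty,b;c,x]=\tfrac{x-b}{c-b}$, the membership $[\infty,b;c,x]\in O$ is equivalent to $x\in b+(c-b)O$, so the relation encodes exactly the family of affine copies $\{\,b+tO:\ b\in F,\ t\in F^{\times}\,\}$ of $O$. Because $f$ fixes $\infty$ and preserves the relation, it permutes this family; concretely
\[
f\bigl(b+(c-b)O\bigr)=f(b)+\bigl(f(c)-f(b)\bigr)O\qquad(b\neq c).
\]
Taking $b=0,\ c=1$ recovers $f(O)=O$ (consistent with $[\infty,0;1,x]=x$), and varying $b,c$ shows that $f$ carries every affine image of $O$ to an affine image of $O$ with controlled parameters. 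The target is now to upgrade this ``copies go to copies'' property to linearity: that $f$ is additive and multiplicative, whence, being a bijection fixing $0$ and $1$, a field automorphism.

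For multiplicativity the copies centred at $0$ give $f(aO)=f(a)O$ for all $a\in F^{\times}$ (the case $b=0$ above), so $f$ respects the equivalence $aO=a'O$ and descends to the quotient of $F^{\times}$ by the multiplicative symmetry group $\{u:uO=O\}$ of $O$; feeding this back, together with the base-changed copies, into the set equalities above forces $f(ab)=f(a)f(b)$. For additivity I would compare copies $b+tO$ sharing a common slope $t$, where the displayed set equalities become a Cauchy-type functional equation for $f$ on cosets; here the hypothesis $\primef(O)\subsetneq F$ is essential, since it furnishes elements outside $\primef(O)$ to use as free parameters and excludes the degenerate case in which $O$ generates $F$ and spurious non-semilinear bijections could permute the copies. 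The proviso for $\chr(F)=2$ enters precisely where the additive analysis is most delicate: perfectness supplies the square roots that replace the missing division by $2$ (the obstruction that makes characteristic $2$ exceptional already for harmonic quadruples), and $|F|>4$ discards the small sporadic configurations.

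I expect the decisive obstacle to be this last step: converting the purely set-theoretic statement that $f$ permutes the affine copies of $O$ (and, using general four-tuples, the projective copies) into the algebraic statement that $f$ is semilinear. The difficulty is that $O$ is an arbitrary invariant set rather than a single value, so one cannot simply recover the cross-ratio function from its $O$-fibre; the leverage must come entirely from how the various copies of $O$ overlap and nest, and it is exactly the conditions $\primef(O)\subsetneq F$ and the $\chr(F)=2$ proviso that make this overlap rigid enough to pin down $f$ up to $\operatorname{Aut}(F)$.
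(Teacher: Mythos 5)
Your setup is fine: the inclusion $P\Gamma L_2(F)\subseteq G$, the reduction via sharp $3$-transitivity of $PGL_2(F)$ to showing that every $f\in G$ fixing $0,1,\infty$ is a field automorphism, and the observation that (since $f$ fixes $\infty$) the relation forces $f\bigl(b+(c-b)O\bigr)=f(b)+\bigl(f(c)-f(b)\bigr)O$ are all correct. But the proof stops exactly where it needs to begin. The passage from ``$f$ permutes the affine copies of $O$'' to ``$f$ is additive and multiplicative'' is asserted with the words ``forces'' and ``becomes a Cauchy-type functional equation,'' and you yourself flag it as the ``decisive obstacle'' without resolving it. That step is the entire content of the theorem: $O$ can be a single finite Galois orbit (say $\{\pm\sqrt2\}$ inside a big field), so the copies $b+tO$ are tiny sets whose overlaps carry very little information, and no direct Cauchy-equation argument on these copies is available. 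Indeed the paper's open Question about $\{\pm\sqrt 2\}$-preserving permutations of $\mathbb{Q}(\sqrt2)\cup\{\infty\}$ shows the authors themselves do not know how to run such an argument when the auxiliary hypothesis fails, which is strong evidence that nothing soft will close this gap.

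The paper bridges it differently, and your guess about where $\primef(O)\subsetneq F$ enters is off in a way that matters. The paper first enlarges $O$ to the field $K=\primef(O)$ it generates: through explicit cross-ratio identities it manufactures permutations of $O$ satisfying functional equations (Corollary \ref{C:permutation-functions}), uses them to prove closure properties of $f(K)$ (Lemmas \ref{L:first-closures}, \ref{L:second-closures}), and concludes $f(K)=K$ (Proposition \ref{P:K-to-K}, where perfectness and Hua's identity handle characteristic $2$ --- your intuition about square roots replacing division by $2$ is essentially right). This yields that $f$ maps $K$-chains to $K$-chains, and then the decisive tool is the Fundamental Theorem of Affine Geometry applied to $F$ viewed as a $K$-vector space: $K$-chains through $\infty$ are exactly the projective affine $K$-lines, so $f$ preserves affine lines and is therefore semilinear over $K$, hence additive; multiplicativity then follows by conjugating with $x\mapsto 1/x$ (plus Hua's identity again in characteristic $2$). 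The hypothesis $\primef(O)\subsetneq F$ is used precisely to guarantee $\dim_K F\geq 2$, without which the affine-geometry theorem does not apply --- not, as you suggest, to supply ``free parameters'' for a functional equation. So your proposal is not a proof with the same skeleton and a missing lemma; the missing piece is the main argument, and the paper's route to it (field generation, $K$-chains, affine geometry) is structurally different from the one you sketch.
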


The motivation for seeking such a generalization came from infinite symmetric groups, and some model theory. It is well known that for a cardinal $\kappa$, the closed subgroups (in the product topology) of the infinite symmetric group $S_\kappa$, for a cardinal $\kappa$, in the product topology correspond exactly to automorphisms groups of first-order structures. 
Thus finding closed supergroups of such groups, sheds light on the the first order theory of such structures.  

In \cite{itaypierre}, the second author and Pierre Simon proved that the affine groups $AGL_n(\mathbb{Q})$ (for $n\geq 2$) and the projective linear groups $PGL_n(\mathbb{Q})$ (for $n\geq 3$) are maximal closed in $S_\omega$. They ask whether it is true that $P\Gamma L_2(F)$ is maximal closed, for an algebraically closed field $F$ of transcendence degree greater that $1$. The aim of this paper is a step towards answering this question.

Bogomolov and Rovinsky proved that $P\Gamma L_n(F)$ is maximal closed for $n\geq 3$ and any field $F$, see \cite{bogrov}. The reason for the distinction between $n=2$ and $n\geq 3$ is that by the fundamental theorem of projective geometry, $P\Gamma L_n(F)$ (for $n\geq 3$) is exactly the collineation group of $\mathbb{P}^{n-1}(F)$. On the other hand, for $n=2$, since $\mathbb{P}^1(F)$ is the projective line, all the points are collinear. 

If $P\Gamma L_2(F)$ were not maximal closed, a proper supergroup of it must preserve one out of a known family of relations, two of them being quaternary relations (see \cite{itaypierre} for details). The aim is to show that it can not preserve any member of this family of relations. In this paper, using the above theorem, we conclude that if $F$ is an algebraically closed field of transcendence degree at least $1$, then any group of permutation of $F\cup\{\infty\}$ properly containing $P\Gamma L_2(F)$ is $4$-transitive and hence does not preserve any proper quaternary relation.

\subsubsection*{Acknowledgments}
This project started as a derivative from the work of the second author with Pierre Simon in \cite{itaypierre}. Eventually ideas from that time contributed to the proof of the main theorem here, namely in the proof of (4) implies (1) in Theorem \ref{T:main}. We thank him for allowing us to use his ideas. 
\section{Proofs}

\begin{definition}
Let $f$ be a permutation of $F\cup\{\infty\}$ and $\emptyset\neq O\subseteq F\setminus \{0,1\}$. We say that $f$ is \emph{$O$-preserving} if 
\[[a,b;c,d]\in O\Longleftrightarrow [f(a),f(b);f(c),f(d)]\in O,\] where $a,b,c,d$ are distinct elements from $F\cup\{\infty\}$.
\end{definition}
\begin{remark}
In this paper the cross-ratio is only taken for distinct points so that it takes values in $F\setminus\{0,1\}$ (one can expand the definition to allow repetitions, but this will not be used).
%
\end{remark}

Throughout we will implicitly use the following property of the cross-ratio:
\begin{quote}
For any two quadruples of distinct elements of $F\cup\{\infty\}$, $\{A,B,C,D\}$ and $\{A,B,C,X\}$, the following holds: \[[A,B;C,D]=[A,B;C,X]\Longleftrightarrow D=X.\] 
\end{quote}

\begin{proposition}\label{P:gen-perm-function}
For every $x\in F\setminus\{0,1\}$ there exists a unique function \[g_x:X\to F\cup\{\infty\},\] where $X\subseteq (F\cup\{\infty\})^3$ is the set of triples of distinct elements, such that for every distinct $a,b,c\in F\cup\{\infty\}$ 
\begin{enumerate}
\item $g_x(a,b,c)\neq a,b,c$,
\item $\left[a,b;c,g_x(a,b,c)\right]=x$ and
\item the map $x\mapsto g_x(a,b,c)$ is injective.
\end{enumerate}
Furthermore, if $f$ is an $O$-preserving permutation of $F\cup\{\infty\}$, for some $\emptyset\neq O\subseteq F\setminus\{0,1\}$, then for every distinct $a,b,c\in F\cup\{\infty\}$ there exists a permutation $\alpha:O\to O$, such for every $x\in O$
\[f\left( g_x(a,b,c)\right)=g_{\alpha (x)}\left( f(a),f(b),f(c)\right).\]
\end{proposition}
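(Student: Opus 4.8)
The plan is to treat the cross-ratio as a function of its last argument. Fix distinct $a,b,c\in F\cup\{\infty\}$ and consider the map $\mu_{a,b,c}\colon d\mapsto [a,b;c,d]$. One checks directly from the defining formula (with the usual conventions at $\infty$) that $\mu_{a,b,c}$ is a fractional linear transformation of $F\cup\{\infty\}$ sending $a\mapsto \infty$, $b\mapsto 0$, $c\mapsto 1$; since these three images are distinct, $\mu_{a,b,c}$ is a genuine (invertible) projectivity, hence a bijection of $F\cup\{\infty\}$ onto itself. I would then define $g_x(a,b,c)$ to be the unique preimage $\mu_{a,b,c}^{-1}(x)$. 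Condition (2) holds by construction, and since $x\in F\setminus\{0,1\}$ avoids the three values $\infty,0,1$, its preimage avoids $a,b,c$, giving (1). Uniqueness of $g_x$ and injectivity of $x\mapsto g_x(a,b,c)$ (condition (3)) are both immediate from the quoted property of the cross-ratio: if $g_x(a,b,c)=g_{x'}(a,b,c)$, applying the cross-ratio recovers $x=x'$.

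For the ``Furthermore'' part, fix distinct $a,b,c$, write $a'=f(a)$, $b'=f(b)$, $c'=f(c)$ (distinct, as $f$ is a permutation), and define $\alpha$ as follows. Given $x\in O$, set $d=g_x(a,b,c)$, so that $[a,b;c,d]=x\in O$ and $d\neq a,b,c$. Since $f$ is a permutation we have $f(d)\neq a',b',c'$, and since $f$ is $O$-preserving we get $[a',b';c',f(d)]\in O$. Setting $\alpha(x):=[a',b';c',f(d)]\in O$, the uniqueness from part (1) forces $f(d)=g_{\alpha(x)}(a',b',c')$, which is exactly the required identity $f\big(g_x(a,b,c)\big)=g_{\alpha(x)}\big(f(a),f(b),f(c)\big)$.

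It then remains to check that $\alpha\colon O\to O$ is a permutation. Injectivity follows by combining the injectivity of $f$ with condition (3): if $\alpha(x)=\alpha(x')$ then $f\big(g_x(a,b,c)\big)=f\big(g_{x'}(a,b,c)\big)$, hence $g_x(a,b,c)=g_{x'}(a,b,c)$, hence $x=x'$. Surjectivity I would obtain by running the construction backwards, using that $f^{-1}$ is again $O$-preserving and that $f$ is onto: given $y\in O$, the point $g_y(a',b',c')$ equals $f(d)$ for a unique $d\neq a,b,c$, and the reverse implication in the $O$-preserving hypothesis yields $x:=[a,b;c,d]\in O$ with $\alpha(x)=y$.

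The argument is essentially a bookkeeping exercise once the first part is in hand; the only real care needed lies in the conventions governing $\infty$, $0$ and $1$ and in maintaining distinctness of the four points throughout, so that every invocation of the cross-ratio and of the uniqueness property is legitimate. I do not anticipate any genuine obstacle beyond this.
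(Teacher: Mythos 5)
Your proposal is correct and takes essentially the same route as the paper: the paper also obtains $g_x$ from the fact that property (2) determines it uniquely (writing the explicit rational formula rather than inverting the projectivity $d\mapsto[a,b;c,d]$), and it defines $\alpha(x)=\left[f(a),f(b);f(c),f(g_x(a,b,c))\right]$ exactly as you do. Your surjectivity argument, running the construction backwards through $f^{-1}$, is precisely the paper's explicit definition of $\alpha^{-1}(y)=\left[a,b;c,f^{-1}(g_y(f(a),f(b),f(c)))\right]$, so the two proofs coincide in all essentials.
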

\begin{proof}
Property $(2)$ uniquely determines $g_x$, and by the definition of the cross-ratio we get the following formula:
\[g_x(a,b,c)=\frac{b(c-a)-ax(c-b)}{(c-a)-x(c-b)}.\] Properties $(1)$ and $(3)$ follow easily.

As for the furthermore, for $x\in O$, define 
\[\alpha(x):=\left[f(a),f(b);f(c),f(g_x(a,b,c))\right]\in O\]
and likewise
\[\alpha^{-1}(x):=\left[a,b;c,f^{-1}(g_x(f(a),f(b),f(c)))\right]\in O.\]
They are both elements of $O$ since $f$ is $O$-preserving, and note that by uniqueness, the definition of $\alpha$ gives that \[f(g_x(a,b,c))=g_{\alpha (x)}(f(a),f(b),f(c)).\]

\begin{claim}
For every $a,b,c,y\in F\cup\{\infty\}$ distinct, if $[a,b;c,y]\in O$ then
\[\alpha([a,b;c,y])=[f(a),f(b);f(c),f(y)]\]
and if $[f(a),f(b);f(c),y]\in O$ then
\[\alpha^{-1}([f(a),f(b);f(c),y])=[a,b;c,f^{-1}(y)].\]
\end{claim}
\begin{claimproof}
Assume that $[a,b;c,y]=x \in O$. By uniqueness 
necessarily \[y=g_x(a,b,c).\] It now follows that
\[ \alpha([a,b;c,y])=\alpha(x)=[f(a),f(b);f(c),f(g_x(a,b,c))]=[f(a),f(b);f(c),f(y)],\] as required.

The proof for $\alpha^{-1}$ is similar.
\end{claimproof}

We may now compute
\[(\alpha\circ\alpha^{-1})(x)=\alpha\left([a,b;c,f^{-1}(g_x(f(a),f(b),f(c)))]\right)=\]
\[[f(a),f(b);f(c),g_x(f(a),f(b),f(c))]=x.\]
Similarly we get that $(\alpha^{-1}\circ \alpha)(x)=x$, as needed.

\end{proof}

\begin{remark}
The previous proposition is obviously also true if we permute the coordinates of the cross-ratio, e.g. consider a function $h_x$ which guarantees that 
\[[a,b;h_x(a,b,c),c]=x.\]
\end{remark}

We will frequently use the following hypothesis.
\begin{hypothesis}\label{H:hyp}
The set $O$ is a \underline{non-empty} subset of $F\setminus \{0,1\}$. The function $f$ is an $O$-preserving permutation of $F\cup\{\infty\}$ which fixes $\{0,1,\infty\}$ pointwise. 

The field $K=k(O)$ is the field generated by the elements of $O$, where $k$ is the prime field.
\end{hypothesis}

\begin{corollary}\label{C:permutation-functions}
Assume Hypothesis \ref{H:hyp}. For all $a\neq b\in F$ there exist permutations $\tau_{a,b},\rho_{a,b},\chi_{a,b},\alpha_{a,b},\beta_{a,b}:O\to O$, such that for every $x\in O$:
\[f(ax+b(1-x))=f(a)\tau_{a,b}(x)+f(b)(1-\tau_{a,b}(x)),\]
\[f\left( \frac{a-(1-x)b}{x}\right)=\frac{f(a)-(1-\rho_{a,b}(x))f(b)}{\rho_{a,b}(x)},\]
\[f\left(\frac{a-xb}{1-x}\right)=\frac{f(a)-\chi_{a,b}(x)f(b)}{1-\chi_{a,b}(x)},\]
\[f\left(\frac{abx-bx-ab+a}{ax-x-b+1}\right)=\frac{f(a)f(b)\alpha_{a,b}(x)-f(b)\alpha_{a,b}(x)-f(a)f(b)+f(a)}{f(a)\alpha_{a,b}(x)-\alpha_{a,b}(x)-f(b)+1},\]
\[f\left(\frac{a-b-abx+bx}{a-b-ax+x}\right)=\frac{f(a)-f(b)-f(a)f(b)\beta_{a,b}(x)+f(b)\beta_{a,b}(x)}{f(a)-f(b)-f(a)\beta_{a,b}(x)+\beta_{a,b}(x)}.\]
(for $\alpha_{a,b}$ and $\beta_{a,b}$ we also require that $a,b\neq 1$.)

Moreover, $f\restriction O$ is a permutation of $O$.

\end{corollary}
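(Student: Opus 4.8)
The plan is to read each displayed identity in Corollary \ref{C:permutation-functions} as a concrete instance of Proposition \ref{P:gen-perm-function} (and its Remark), obtained by specializing the three fixed points to the triple $(0,1,\infty)$ or a permutation thereof. Since $f$ fixes $\{0,1,\infty\}$ pointwise by Hypothesis \ref{H:hyp}, the images $f(a),f(b),f(c)$ appearing in the Proposition collapse to the fixed points whenever we feed in a triple drawn from $\{0,1,\infty\}$, which is exactly what makes the right-hand sides of the five identities take the clean stated form. First I would verify, by the cross-ratio formula $[a,b;c,d]=\frac{c-a}{c-b}\cdot\frac{d-b}{d-a}$, that each parametrized family on the left is precisely $g_x$ (or its coordinate-permuted variant $h_x$) evaluated at the appropriate base triple: for instance, a direct substitution shows $[a,b;0,t]=x$ is solved by $t=\frac{a-(1-x)b}{x}$-type expressions, matching $\tau_{a,b}$, while the variants with denominators $1-x$, or the more elaborate $\alpha_{a,b},\beta_{a,b}$ formulas, correspond to placing the fixed coordinates in the second or first slot. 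This is the bookkeeping step: identify for each of the five maps which permutation of $(0,1,\infty)$ and which of $g_x,h_x$ is in play.

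Once the correspondence is pinned down, each identity follows immediately from the ``furthermore'' clause of Proposition \ref{P:gen-perm-function}. Concretely, for the triple $(a,b,c)$ chosen so that $g_x(a,b,c)$ (or $h_x$) equals the left-hand parametrization, the Proposition supplies a permutation $\alpha\colon O\to O$ with $f(g_x(a,b,c))=g_{\alpha(x)}(f(a),f(b),f(c))$; substituting the fixed values of $f$ on the base triple and re-expanding $g_{\alpha(x)}$ via the same explicit formula yields exactly the displayed right-hand side, and we simply name the resulting permutation $\tau_{a,b}$, $\rho_{a,b}$, etc. Thus the existence of each of the five permutations of $O$ is not a separate argument but five applications of one already-proved fact, differing only in which base triple and which coordinate slot we use.

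For the ``moreover'' clause that $f\restriction O$ is a permutation of $O$, I would specialize one of the identities — the first is cleanest — to values of $a,b$ lying in $\{0,1,\infty\}$, so that the left-hand argument $ax+b(1-x)$ itself ranges over $O$ as $x$ ranges over $O$. Taking $(a,b)=(1,0)$ gives $ax+b(1-x)=x$, and the identity reduces to $f(x)=f(1)\tau_{1,0}(x)+f(0)(1-\tau_{1,0}(x))=\tau_{1,0}(x)$ using $f(0)=0$, $f(1)=1$; hence $f(x)=\tau_{1,0}(x)\in O$ for every $x\in O$, and since $\tau_{1,0}$ is a permutation of $O$, so is $f\restriction O$. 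One must only check that the triple $(1,0,\infty)$ is admissible (it is distinct) so that Proposition \ref{P:gen-perm-function} applies with these base points.

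I expect the main obstacle to be purely organizational rather than conceptual: correctly matching each of the five algebraic templates to the right permutation of $(0,1,\infty)$ and to either $g_x$ or $h_x$, and confirming in each case that the specialization of $f$ on the fixed triple genuinely produces the stated right-hand side after expansion. The $\alpha_{a,b}$ and $\beta_{a,b}$ formulas, with their two-variable rational shape and the extra restriction $a,b\neq 1$, are where a sign or an index can easily slip, so the careful step is re-deriving those two explicit denominators from $g_{\alpha(x)}$ with the base fixed points inserted, rather than any genuine difficulty in the logic.
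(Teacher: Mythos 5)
Your proposal is in substance the paper's own proof: each displayed identity is an application of the ``furthermore'' clause of Proposition \ref{P:gen-perm-function} (in the coordinate-permuted form of the Remark) to a base triple whose image under $f$ is known where it needs to be, and the ``moreover'' clause follows by a specialization; your $(a,b)=(1,0)$ argument for the moreover part is correct and is the same computation as the paper's observation that $[a,1;0,\infty]=a$. However, your bookkeeping, where you attempt it, is off in a way worth fixing: the base triples cannot be ``permutations of $(0,1,\infty)$'' nor ``triples drawn from $\{0,1,\infty\}$'' --- they must contain $a$ and $b$ as two of the three coordinates, since otherwise $f(a)$ and $f(b)$ could never appear on the right-hand sides; only \emph{one} coordinate is a fixed point of $f$. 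Concretely, the correct matchings are: the unknown in the first, second, or third slot of $[\,\cdot\,,a;b,\infty]$, $[a,\,\cdot\,;b,\infty]$, $[a,b;\,\cdot\,,\infty]$ for $\tau_{a,b},\rho_{a,b},\chi_{a,b}$ respectively, and the unknown in the fourth slot of $[b,a;1,\,\cdot\,]$ and $[b,1;a,\,\cdot\,]$ for $\alpha_{a,b},\beta_{a,b}$ (whence the requirement $a,b\neq 1$). Your one worked example is also false: $[a,b;0,t]=x$ is solved by $t=\frac{ab(1-x)}{a-xb}$, which matches none of the five identities, while the expression $\frac{a-(1-x)b}{x}$ you cite solves $[a,t;b,\infty]=x$ and belongs to $\rho_{a,b}$, not $\tau_{a,b}$. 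Since your general criterion (choose the triple so that $g_x$ or $h_x$ at that triple equals the left-hand parametrization) is the right one and would force these corrections upon execution, the plan goes through, but as written the guiding description and the sample verification would mislead anyone following it literally.
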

\begin{proof}
We apply Proposition \ref{P:gen-perm-function}.
Let $a\neq b\in F$.
For $\tau_{a,b}$ use the identity
\[[ax+b(1-x),a;b,\infty]=x.\]

For $\rho_{a,b}$ use the identity
\[\left[a,\frac{a-(1-x)b}{x};b,\infty\right]=x.\]

For $\chi_{a,b}$ use the identity 
\[\left[a,b;\frac{a-xb}{1-x},\infty\right]=x.\]

For $\alpha_{a,b}$ use the identity
\[\left[ b,a;1,\frac{abx-bx-ab+a}{ax-x-b+1}\right]=x.\]

For $\beta_{a,b}$ use the identity
\[\left[b,1;a,\frac{a-b-abx+bx}{a-b-ax+x}\right]=x.\]

In order to show that $f\restriction O$ is a permutation of $O$, note that $[a,1,0,\infty]=a$ for every $a\in F\setminus \{0,1\}$.

\end{proof}

\begin{lemma}\label{L:first-closures}
Assume Hypothesis \ref{H:hyp}. For every $a,b\in K$ and $x\in O$
\[f(a)+(1-x)f(b)\in f(K) \text{ and}\]
\[xf(a)+f(b)\in f(K).\]
\end{lemma}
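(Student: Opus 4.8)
The plan is to exploit the explicit identities recorded in Corollary \ref{C:permutation-functions}, restricting all points to $K$. First I would feed the $\tau_{a,b}$ identity with $a,b\in K$ and $x\in O$: since then $ax+b(1-x)\in K$, the left-hand side lies in $f(K)$, so $f(a)\tau_{a,b}(x)+f(b)(1-\tau_{a,b}(x))\in f(K)$; as $\tau_{a,b}$ permutes $O$, this gives $yf(a)+(1-y)f(b)\in f(K)$ for every $y\in O$ and all $a,b\in K$. Taking $b=0$ and then $a=0$ (both fixed by $f$) shows that multiplication by any element of $O$, and by any element of $1-O$, maps $f(K)$ into itself.

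Next I would clear denominators in the $\rho_{a,b}$ and $\chi_{a,b}$ identities. Restricted to $a,b\in K$, $x\in O$, both left-hand arguments $\frac{a-(1-x)b}{x}$ and $\frac{a-xb}{1-x}$ lie in $K$, so the right-hand sides lie in $f(K)$; multiplying by the denominators $\rho_{a,b}(x)\in O$ and $1-\chi_{a,b}(x)\in 1-O$ — which are multipliers by the previous paragraph — removes the fractions and yields $f(a)-zf(b)\in f(K)$ for every $z\in O\cup(1-O)$ and all $a,b\in K$. Because $f$ is a bijection and $K$ is a field, this reads as a closure property of the set $f(K)$: for all $E_1,E_2\in f(K)$ and every $z\in O\cup(1-O)$ one has $E_1-zE_2\in f(K)$. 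Since $0=f(0)\in f(K)$, applying this with $E_1=0$ also puts $-zE_2$ in $f(K)$, so the elements of $-(O\cup(1-O))$ are multipliers as well.

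Finally I would assemble the two displayed elements. Given $E_1,E_2\in f(K)$ and $z,z'\in O\cup(1-O)$, each of $\pm zE_2$ lies in $f(K)$, so the subtractive closure applied to $E_1$ and $\mp zE_2$ gives $E_1\pm z'z\,E_2\in f(K)$; iterating shows $E_1+\gamma E_2\in f(K)$ for every $\gamma$ in the additive group $\Gamma$ generated by the products $\{z'z:z',z\in O\cup(1-O)\}$. The key observation is then the algebraic identities
\[ x=x^2+x(1-x), \qquad 1-x=(1-x)^2+(1-x)x, \]
which exhibit $x$ and $1-x$ as elements of $\Gamma$, since each summand is a product of two members of $O\cup(1-O)$. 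Taking $(E_1,E_2)=(f(a),f(b))$ with $\gamma=1-x$ gives the first formula, and $(E_1,E_2)=(f(b),f(a))$ with $\gamma=x$ gives the second.

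The main obstacle is precisely this last assembly. Every single identity from the Corollary is affine (its two coefficients sum to $1$) or purely subtractive, while the target coefficient patterns $(1,1-x)$ and $(x,1)$ sum to something other than $1$; hence no one identity can produce them, and a naive chaining of affine combinations stays affine. The crux is to notice that $0\in f(K)$ allows one to negate multipliers, and that chaining the subtractive closure then delivers every element of the additive group generated by products of members of $O\cup(1-O)$ — a group which, by the two displayed identities, already contains $x$ and $1-x$. Care is needed only to keep every scalar used inside $O\cup(1-O)$, so that no multiplicative inverse is ever required and the argument runs uniformly in all characteristics.
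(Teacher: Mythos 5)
Your argument is correct (up to trivial degenerate cases, noted at the end) but it takes a genuinely different route from the paper's. The paper also works entirely from Corollary \ref{C:permutation-functions}, but rather than building up a group of admissible coefficients it composes two identities directly: using $\rho_{a,0}$ it manufactures a pre-scaled element $a/x_2\in K$ with $f(a/x_2)=f(a)/x$, and then applies $\tau_{a/x_2,b}$, so that the affine coefficients $(x,1-x)$ act on the pre-scaled value $f(a)/x$ and land on $f(a)+f(b)-xf(b)$ in a single stroke (symmetrically, $\chi_{b,0}$ followed by $\tau_{a,\,b/(1-x_2)}$ gives $xf(a)+f(b)$). In particular, the obstacle you describe --- that no single identity can produce the non-affine coefficient patterns --- holds only if the identities are applied to $a,b$ themselves; the paper evades it by re-choosing the arguments inside $K$ before applying the affine identity, not by leaving the affine framework. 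Your route instead keeps the arguments fixed and enlarges the set of achievable coefficients: the multiplier property, subtractive closure, negation through $0=f(0)$, and iteration give $E_1+\gamma E_2\in f(K)$ for every $\gamma$ in the additive group generated by products of two elements of $O\cup(1-O)$, and the identities $x=x^2+x(1-x)$ and $1-x=(1-x)^2+(1-x)x$ finish. What your approach buys is a strictly stronger closure statement and uniformity (no case split on whether $b=a/x_2$, no special role for any particular point); what the paper's buys is brevity, since once the right composition is found the verification is a two-line computation. One caveat to record: Corollary \ref{C:permutation-functions} defines $\tau_{a,b},\rho_{a,b},\chi_{a,b}$ only for $a\neq b$, and your specializations to $b=0$ or $a=0$ need the other point nonzero, so the degenerate cases $a=b$, $a=0$, $b=0$ must be checked separately; they are all immediate (e.g. $f(a)-zf(a)=(1-z)f(a)\in f(K)$ by your own multiplier property), and the paper spells the analogous cases out explicitly.
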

\begin{proof}
We start with the first assertion, so let $a,b\in K$ and $x\in O$. If $b=0$ there is nothing to show. If $a=0$ and $b\neq 0$, then since $\tau_{0,b}$ is a permutation, by Corollary \ref{C:permutation-functions},
\[(1-x)f(b)=(1-(\tau_{a,b}\circ\tau_{a,b}^{-1})(x))f(b)=f((1-\tau_{a,b}^{-1}(x))b)\in f(K).\] 
We may thus assume that $a,b\neq 0$ and let $x_2=\rho_{a,0}^{-1}(x)$, so $f(a/x_2)=f(a)/x$. 
If $b=\frac{a}{x_2}$ then \[f(a)+f(b)-xf\left(\frac{a}{x_2}\right)=f(b)\in f(K).\]
Now, assume that $b\neq \frac{a}{x_2}$, and let $x_3=\tau_{a/x_2,b}^{-1}(x)$. Hence
\[f\left(\frac{a}{x_2}x_3+b(1-x_3)\right)=f\left(\frac{a}{x_2}\right)\tau_{a/x_2,b}(x_3)+f(b)(1-\tau_{a/x_2,b}(x_3))\]
\[=f(a)+f(b)-xf(b).\]

Now the second assertion. If $a=0$ there is nothing to show. If $a\neq 0$ and $b=0$ then since $\tau_{a,b}$ is a permutation, $xf(a)\in f(K)$. 
We may thus assume that $a,b\neq 0$ and let $x_2=\chi_{b,0}^{-1}(x)$. If $a=\frac{b}{1-x_2}$ then
\[xf\left(\frac{b}{1-x_2}\right)+f(b)=\frac{x}{1-x}f(b)+f(b)=f(a)\in f(K).\]
Finally, assume that $a\neq \frac{b}{1-x_2}$, and let $x_3=\tau_{a,b/(1-x_2)}^{-1}(x)$. Hence
\[f\left(ax_3+\frac{b}{1-x_2}(1-x_3)\right)=f(a)\tau_{a,b/(1-x_2)}(x_3)+f\left(\frac{b}{1-x_2}\right)(1-\tau_{a,b/(1-x_2)}(x_3))\]
\[=f(a)x+f(b).\]

\end{proof}

\begin{lemma}\label{L:second-closures}
Assume Hypothesis \ref{H:hyp}. For every $0\neq a\in K$ and $x\in O$, 
\[-f(a)^2x+f(a)x+f(a)\in f(K) \text{ and}\]
\[1+x-\frac{x}{f(a)}\in f(K).\]
\end{lemma}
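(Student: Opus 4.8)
The plan is to reduce each of the two memberships to a single ``core'' element, and then to produce those core elements from the quadratic cross-ratio construction of Proposition \ref{P:gen-perm-function} applied to a \emph{prepared} triple.

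First I would rewrite the targets. Setting $u=f(a)$, the first element is $-u^2x+ux+u=x\,(u-u^2)+u$ and the second is $1+x-\frac{x}{u}=x\,(1-\frac1u)+1$. Since $u=f(a)\in f(K)$ and $1=f(1)\in f(K)$, Lemma \ref{L:first-closures}, in the form $x\,f(c)+f(d)\in f(K)$, reduces everything to the two \emph{core} statements
\[
u-u^2\in f(K)\qquad\text{and}\qquad 1-\tfrac1u\in f(K).
\]
(The degenerate case $a=1$, where both cores are $0$, is handled trivially.) Before producing them I would record the closure properties of $f(K)$: combining the two conclusions of Lemma \ref{L:first-closures} with $\rho_{\cdot,0}$ (which gives $z/x\in f(K)$ for $z\in f(K)$, $x\in O$) shows $f(K)$ is closed under addition, via $x\cdot(z/x)+w=z+w$, and together with $\tau_{\cdot,0},\chi_{\cdot,0}$ it is closed under multiplication and division by $O$ and by $1-O$, and contains $O\cup\{0,1\}$.

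To land the cores I would feed a prepared triple into Proposition \ref{P:gen-perm-function}. Fix $s\in O$ and use $\rho_{a,0}$ to pick $b\in K$ with $f(b)=u/s$; then for the triple $(a,b,1)$ the proposition gives $g_z(f(a),f(b),1)\in f(K)$ for every $z\in O$ (because $g_x(a,b,1)\in K$ and $\alpha$ permutes $O$). A short computation shows that choosing the combine-parameter $z$ equal to the preparation scalar $s$ collapses the denominator to the constant $s(1-s)$, giving $g_s(u,u/s,1)=\frac{u(1+s-u)}{s}$; multiplying by $s\in O$ yields
\[
u+su-u^2\;=\;(u-u^2)+s\,u\ \in f(K)\qquad\text{for every }s\in O .
\]
An entirely parallel preparation, placing the $u$-coordinate so that it survives in a denominator, produces the companion affine family for the core $1-\frac1u$ (this is exactly why Corollary \ref{C:permutation-functions} supplies the five separate identities $\tau,\rho,\chi,\alpha,\beta$).

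The hard part is the final \emph{extraction} of the bare core from these families. One is left with $(u-u^2)+s\,u\in f(K)$ for all $s\in O$, together with $s\,u\in f(K)$ and closure under addition and under scaling by $O\cup(1-O)$ — but with no free subtraction available — and must delete the unwanted summand $s\,u$. In positive characteristic $p$ this is clean: adding the $p-1$ further copies $s\,u\in f(K)$ annihilates that term, so $u-u^2\in f(K)$ (and similarly for the reciprocal core). In characteristic $0$ the same cancellation must be arranged through the admissible operations in a different way, and I expect this to be the delicate point where the full strength of the setup is used. Once both cores lie in $f(K)$, the reduction of the first paragraph completes the proof.
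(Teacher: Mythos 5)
Your reduction to the ``cores'' $u-u^2$ and $1-\tfrac1u$ is legitimate, and your computation producing $(u-u^2)+su\in f(K)$ for all $s\in O$ is correct; but the proof is not complete, and the incompleteness is exactly at the step you yourself flag. The final \emph{extraction} of $u-u^2$ from the family $(u-u^2)+su$ is only carried out in positive characteristic (adding $p-1$ copies of $su$); in characteristic $0$ you explicitly leave the cancellation unresolved, and characteristic $0$ is the main case for the paper's application, so this is a genuine gap rather than a routine detail. The difficulty is structural and self-inflicted: by insisting on an $x$-free core you create a superfluous summand $su$ that must be deleted, which requires additive inverses — precisely what is not yet known to exist in $f(K)$ at this stage. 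The paper's proof sidesteps this entirely, which is why the lemma's statement keeps $x$ entangled in the target expressions: it chooses the parameters of Corollary \ref{C:permutation-functions} so that the identity lands \emph{exactly} on the target, with no cancellation. Concretely, for the first assertion it sets $x_1=\tau_{a,0}^{-1}(x)$, so that $f(x_1a)=xf(a)$, and then applies $\alpha_{a,x_1a}$ with $x_2=\alpha_{a,x_1a}^{-1}(x)$; substituting $f(x_1a)=xf(a)$ into the right-hand side collapses the denominator to $1-x$ and the fraction equals $-f(a)^2x+f(a)x+f(a)$ on the nose. The second assertion is identical in spirit, with $x_1=f^{-1}(x)$ and $\beta_{a,x_1}$.

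Your gap is in fact fillable with a tool the paper only deploys later (in the proof of Proposition \ref{P:K-to-K}): negation \emph{is} available from Corollary \ref{C:permutation-functions} alone. For $0\neq c\in K$, the permutation $\rho_{0,c}$ gives $-\tfrac{1-x}{x}f(c)\in f(K)$; writing this as $f(d)$ and applying $\chi_{d,0}$ gives $-\tfrac{1}{x}f(c)\in f(K)$; writing this as $f(e)$ and applying $\tau_{e,0}$ gives $-f(c)\in f(K)$. Combined with your closure of $f(K)$ under addition, this deletes the summand $su$ in any characteristic. Had you included this, your argument would be a correct, genuinely different route (producing the $x$-free elements $f(a)-f(a)^2$ and $1-\tfrac1{f(a)}$, which is in some ways a cleaner intermediate statement); as written, the characteristic-$0$ case is missing. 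A smaller untreated point: your prepared triple $(a,b,1)$ requires $b\neq 1$, i.e. $f(a)\neq s$, which can fail (e.g.\ when $O$ is a singleton); the paper's proof handles the analogous degeneracies $x_1a=1$ and $a=x_1$ explicitly, and you would need to do the same.
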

\begin{proof}
Let $x\in O$ and $a\in K$ with $a\neq 0$. If $a=1$ both assertions are trivial, so assume $a\neq 1$.

We start with the first assertion. Since $\tau_{a,0}$ is a permutation, by Corollary \ref{C:permutation-functions}, we may define $x_1=\tau_{a,0}^{-1}(x)$ so, $f(x_1a)=xf(a)$. We aim to use the permutation $\alpha_{a,x_1a}$. Obviously, $x_1a\neq a$ and if $x_1a=1$ then $xf(a)=1$ and $-f(a)^2x+f(a)x+f(a)=0$. Thus by Corollary \ref{C:permutation-functions}, $\alpha_{a,x_1a}$ is a permutation. Let $x_2:=\alpha_{a,x_1a}^{-1}(x)$, so
\[f\left(\frac{a(x_1a)x_2-(x_1a)x_2-a(x_1a)+a}{ax_2-x_2-(x_1a)+1}\right)=\]\[\frac{f(a)f(x_1a)x-f(x_1a)x-f(a)f(x_1a)+f(a)}{f(a)x-x-f(x_1a)+1}=-f(a)^2x+f(a)x+f(a).\]

Now for the second assertion. Since $f\restriction O$ is a permutation, we may define $x_1:=f^{-1}(x)$. We aim to use the permutation $\beta_{a,x_1}$. If $a=x_1$ then the statement is obviously true, so we may assume that $a\neq x_1$ (and both not equal to $1$). By Corollary \ref{C:permutation-functions}, $\beta_{a,x_1}$ is a permutation, so we may define $x_2:=\beta_{a,x_1}^{-1}(x)$ and so
\[f\left(\frac{a-x_1-ax_1x_2+x_1x_2}{a-x_1-ax_2+x_2}\right)=\frac{f(a)-f(x_1)-f(a)f(x_1)x+f(x_1)x}{f(a)-f(x_1)-f(a)x+x}=\]
\[1+x-\frac{x}{f(a)}.\]

\end{proof}

\begin{proposition}\label{P:K-to-K}
Assume Hypothesis \ref{H:hyp}, and if when $\chr(F)=2$ we assume further that $O$ is closed under taking square-roots, then $f(K)=K$.
\end{proposition}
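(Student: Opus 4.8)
The plan is to set $L:=f(K)$ and prove that $L$ is a subfield of $F$. Since $f$ fixes $0$ and $1$ we have $0,1\in L$, and since $f\restriction O$ is a permutation of $O$ (Corollary \ref{C:permutation-functions}) we have $O=f(O)\subseteq L$; hence once $L$ is known to be a field it automatically contains the prime field $k$ together with $O$, so $L\supseteq k(O)=K$. The reverse inclusion then comes for free by symmetry: $f^{-1}$ is again an $O$-preserving permutation fixing $\{0,1,\infty\}$ pointwise (the defining condition is symmetric in $f,f^{-1}$), so it satisfies Hypothesis \ref{H:hyp} with the same $O$ and $K$, and the same argument gives $f^{-1}(K)\supseteq K$, i.e. $f(K)\subseteq K$. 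Combining the two inclusions yields $f(K)=K$. It is convenient to first rephrase Lemmas \ref{L:first-closures} and \ref{L:second-closures} as closure properties of $L$: as $a$ ranges over $K$ the value $f(a)$ ranges over all of $L$ (and over $L\setminus\{0\}$ for $a\neq 0$, as $f(0)=0$), so for all $u,v\in L$ and $x\in O$ we have $u+(1-x)v\in L$ and $xu+v\in L$, and for all $u\in L\setminus\{0\}$ we have $-xu^2+xu+u\in L$ and $1+x-\tfrac{x}{u}\in L$.

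The first pair already yields additivity by a one-line trick: given $u,v\in L$, pick any $x\in O$; then $p:=xu+v\in L$, and applying $u'+(1-x)v'\in L$ to the pair $(p,u)$ gives $p+(1-x)u=xu+v+(1-x)u=u+v\in L$. Thus $L$ is closed under addition, and specializing gives $O\cdot L\subseteq L$ and $O+L\subseteq L$. Additive inverses are immediate when $\chr(F)>0$ (then $-u$ is a sum of copies of $u$), and in particular trivial when $\chr(F)=2$; in characteristic $0$ a short separate argument is needed, which I would carry out by feeding natural numbers into the relation $-xu^2+xu+u\in L$, whose internal subtraction produces an element that combines, via additivity, with the already available positive multiples of $x$ and of $1$ to yield $-1\in L$. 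Hence $L$ is an additive subgroup. Next, since $1+x\in L$, the relation $1+x-\tfrac{x}{u}\in L$ gives $\tfrac{x}{u}\in L$ for all $u\in L\setminus\{0\}$, $x\in O$; taking $u=xv$ yields $v^{-1}\in L$ for every $v\in L\setminus\{0\}$, so $L$ is closed under inversion, and inverting once more gives $x^{-1}L\subseteq L$. Likewise, subtracting $xu+u\in L$ from $-xu^2+xu+u\in L$ gives $xu^2\in L$, and multiplying by $x^{-1}$ gives $u^2\in L$, so $L$ is closed under squaring.

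It remains to prove that $L$ is closed under multiplication, and this is the main obstacle. When $\chr(F)\neq 2$ I would argue by polarization: from $u^2,v^2,(u+v)^2\in L$ and additivity one gets $2uv=(u+v)^2-u^2-v^2\in L$ for all $u,v\in L$. Since for $u,v\neq 0$ the element $2uv$ is a nonzero element of $L$, one can then bootstrap using inversion: from $\tfrac{1}{2uv}\in L$ and the relation $2ab\in L$ (applied with $a=\tfrac{1}{2uv}$, $b=w$) one gets $\tfrac{w}{uv}\in L$ for all $w\in L$, and specializing $w=u^2$ gives $\tfrac{u}{v}\in L$, i.e. $L$ is closed under division; then $uv=u/v^{-1}\in L$. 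The characteristic $2$ case is exactly where this breaks down, because $(u+v)^2=u^2+v^2$ carries no information about the cross term $uv$, and this is precisely the reason for the extra hypothesis that $O$ be closed under square roots: I expect it to be used to recover the missing multiplicative information (for instance by realizing the relevant products as squares, exploiting that $\sqrt{x}\in O$ for $x\in O$). I anticipate that the bulk of the remaining work, and the only place where the square-root assumption is needed, is this characteristic $2$ multiplicative closure.

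Once multiplicative closure is in hand, $L$ is closed under addition, additive inverses, multiplication and the taking of inverses, and contains $0$ and $1$; thus $L$ is a subfield of $F$ containing $O$, whence $L\supseteq k(O)=K$. Applying the entire argument to $f^{-1}$ gives $f(K)\subseteq K$ as explained above, and therefore $f(K)=K$.
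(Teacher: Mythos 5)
Your global strategy---show that $L=f(K)$ is a subfield containing $O$ (hence $K\subseteq f(K)$), then apply the same argument to $f^{-1}$, which also satisfies Hypothesis \ref{H:hyp}, to conclude $f(K)\subseteq K$---is exactly the paper's, and several of your steps (the addition trick, inversion via $x/u$, closure under squares, polarization in characteristic $\neq 2$) are correct and close to what the paper does. But there are two genuine gaps, and they sit at precisely the two delicate points. The first is your characteristic-$0$ argument for additive inverses. You restrict yourself to the four closure properties extracted from Lemmas \ref{L:first-closures} and \ref{L:second-closures}, and propose to produce $-1$ by adding elements $-xn^2+xn+n$ ($n$ a natural number) to non-negative integer multiples of $x$ and of $1$. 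Any such sum equals $\bigl(\sum_j n_j+m\bigr)+\bigl(k-\sum_j n_j(n_j-1)\bigr)x$ with $\sum_j n_j+m\geq 0$, so whenever $1$ and $x$ are linearly independent over $\mathbb{Q}$---which nothing in Hypothesis \ref{H:hyp} excludes, and which is the typical situation (in the paper's application $O$ is a union of Galois orbits of algebraic irrationals)---no such sum can equal $-1$. The paper does not confine itself to the two lemmas here: it goes back to Corollary \ref{C:permutation-functions} and uses $\rho_{0,c}$ and $\chi_{d,0}$ to get $-\frac{1-x}{x}f(c)\in f(K)$ and $\frac{f(d)}{1-x}\in f(K)$, hence $-\frac{1}{x}f(c)\in f(K)$, and then $\tau_{a,0}$ (your property $xu+v$ with $v=0$) to conclude $-f(c)\in f(K)$. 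The first two identities are not among, nor evidently derivable from, your four properties; in ``rephrasing the lemmas'' you discarded exactly the tool this step needs. Since everything downstream (inversion, squares, multiplication) uses subtraction, your proof collapses at this point in characteristic $0$.

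The second gap is that the characteristic-$2$ multiplicative closure---the one place where the square-root hypothesis enters, as you correctly observe---is not proved but only hoped for, and it is the real content of the proposition. The paper's argument is not a variant of ``realize products as squares'': since $f(K)$ is closed under squaring, the sets $(f(K))^{2^n}$ form a decreasing chain, each closed under addition, additive and multiplicative inverses, and squaring; one passes to $L'=\bigcap_{n\geq 0}(f(K))^{2^n}$, which contains $O$ because $O$ is closed under square roots, and proves that $L'$ is closed under multiplication via Hua's identity $a-(a^{-1}+(b^{-2}-a)^{-1})^{-1}=a^2b^2$: for $a,b\in(f(K))^{2^{n+1}}$ this identity and the closure properties put $a^2b^2\in(f(K))^{2^{n+1}}$, and injectivity of the Frobenius then gives $ab\in(f(K))^{2^n}$; hence $K\subseteq L'\subseteq f(K)$. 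In characteristic $2$, knowing that $f(K)$ is closed under squaring and addition gives no information whatsoever about cross terms (as you yourself note, $(u+v)^2=u^2+v^2$), and your sketch contains no mechanism to bridge this; so the case that motivates the extra hypothesis is missing entirely. In sum, your proof is complete only in odd positive characteristic, where additive inverses come for free from additivity and polarization finishes the job.
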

\begin{proof}
We first show that $K\subseteq f(K)$. Note that $O\subseteq f(K)$, indeed $f \restriction O$ is a permutation by Corollary \ref{C:permutation-functions}.

Let $a,b\in K$ and $x\in O$, which exists since $O$ is non-empty. By Lemma \ref{L:first-closures}, $f(a)+f(b)-xf(b)\in f(K)$. By the same lemma
\[f(a)+f(b)=xf(b)+\left( f(a)+f(b)-xf(b)\right)\in f(K).\] 

In order to show that if $f(c)\in f(K)$ then also $-f(c)$, first notice that by considering $\rho_{0,c}$ in Corollary \ref{C:permutation-functions} we see that \[-\frac{1-x}{x}f(c)\in f(K).\]
By considering $\chi_{d,0}$ in the same corollary, we see that \[\frac{f(d)}{1-x}\in f(K),\] for any $d\in K$. In particular $-\frac{1}{x}f(c)\in f(K)$, for any $c\in K$. Similarly, by considering $\tau_{a,0}$, we get that $xf(c)\in f(K)$ for all $c\in K$ and together $-f(c)\in f(K)$ for all such $c$, as needed.

As for the multiplication, by Lemma \ref{L:second-closures}, 
\[-f(a)^2x+f(a)x+f(a)\in f(K),\] for every $x\in O$.
Using the above, and since $f(a),f(a)x\in f(K)$, \[-f(a)^2x\in f(K).\] So $-f(a)^2x=f(b)$ for some $b\in K$. Since $\frac{f(b)}{-x}\in f(K)$ as well, $f(a)^2\in f(K).$

Let $a\neq 0\in K$, by Lemma \ref{L:second-closures}, $1+x-\frac{x}{f(a)}\in f(K)$ for any $x\in O$. Using the above, and since $1,x\in f(K)$, \[\frac{x}{f(a)}\in f(K).\] Using a similar argument to the previous paragraph, $-\frac{1}{f(a)}\in f(K)$, so $\frac{1}{f(a)}\in f(K)$.

Finally, we first assume that $\chr(F)\neq 2$. Let $a,b\in K$. Since $\left( f(a)+f(b)\right)^2\in f(K)$ we get that \[2f(a)f(b)\in f(K).\]
To get that $f(K)$ is a subfield, we need this final claim:
\begin{claim}
If $a\in K$ then $\frac{f(a)}{2}\in f(K)$.
\end{claim}
\begin{claimproof}
We may assume that $a\neq 0$. Since $f(a)\in f(K)$ then $1/f(a)\in f(K)$ and so also $2/f(a)$. Take the inverse again and $f(a)/2\in f(K)$.
\end{claimproof}

We conclude that if $\chr(F)\neq 2$, $f(K)$ is a field and so $K\subseteq f(K)$.

Assume that $\chr(F)=2$. For any $n\geq 0$, let
\[(f(K))^{2^n}:=\{a^{2^n}:a\in f(K)\}.\] Note that since $f(K)$ is closed under squares, $\{(f(K))^{2^n}:n\geq 0\}$ forms a decreasing sequence under inclusion. The following is an easy observation:
\begin{claim}
For every $n\geq 0$, $(f(K))^{2^n}$ is also closed under addition, additive and multiplicative inverses and taking square powers.
\end{claim}

Consider $L:=\bigcap_{n\geq 0}(f(K))^{2^n}$. 
\begin{claim}
$L$ is a field containing $O$. Therefore, $K\subseteq L\subseteq f(K)$.
\end{claim}
\begin{claimproof}
Since $O$ is closed under square-roots it is contained in $L$ and by the previous claim $L$ is closed under addition, additive and multiplicative inverses and taking square-powers. Let $a,b\in L$ and let $n\geq 0$. We will show that $ab\in (f(K))^{2^n}$. Since $a,b\in (f(K))^{2^{n+1}}$, by the following form of Hua's identity (first mentioned in \cite{hua} but we use the more manageable form from \cite[page 2]{jacobson}):

\[a-(a^{-1}+(b^{-2}-a)^{-1})^{-1}=a^2b^2,\]
and by the last claim, $a^2b^2\in (f(K))^{2^{n+1}}$. Since the Frobenius map is injective, $ab\in (f(K))^{2^n}$, as required.
\end{claimproof}

Either way, $K\subseteq f(K)$, but since $f^{-1}$ is also $O$-preserving by definition, we actually have $K\subseteq f(K)\subseteq K$ as required.
\end{proof}

\begin{definition}
For an $Aut(F)$-invariant subfield $K\subseteq F$,  a \emph{$K$-chain} is an image of $K\cup \{\infty\}$ under the action of $P\Gamma L_2 (F)$.
\end{definition}
\begin{remark}
The term \emph{$K$-chain} is due originally to von Staudt who introduced it for any real subline of the complex projective line. Note that the usual definition of $K$-chain is for any subfield $K\subseteq F$ and only images under the action of $PGL_2(F)$ (see \cite[Definition 2.2.2]{chain}). Naturally, these definitions are equivalent for $Aut(F)$-invariant subfields. See \cite{chain} for more on this subject, and in a higher level of generality.
\end{remark}

We recall that an action of a group $G$ on a set $X$ (with $|X|\geq k$) is \emph{$k$-transitive} if $G$ acts transitively on the set of $k$-tuples of distinct elements of $X$. For example, the action of the group $PGL_2(F)$ on $F\cup \{\infty\}$ is $3$-transitive.

\begin{corollary}\label{C:sending K-lines to K-lines}

Let $f$ be an $O$-preserving permutation of $F\cup\{\infty\}$, for some non-empty $O\subseteq F\setminus \{0,1\}$ which is $Aut(F)$-invariant. If, when $\chr(F)= 2$, we further assume $O$ is closed under taking square-roots, then $f$ sends $K$-chains to $K$-chains, where $K=\primef(O)$.

\end{corollary}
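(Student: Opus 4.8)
The plan is to reduce the general statement to Proposition \ref{P:K-to-K} by a normalization using the $3$-transitivity of $P\Gamma L_2(F)$, since that proposition already handles the case in which $f$ fixes $\{0,1,\infty\}$ pointwise. Two preliminary observations drive the argument. First, every element of $P\Gamma L_2(F)$ is itself $O$-preserving: the $PGL_2(F)$-part preserves cross-ratios exactly, while an automorphism $\sigma\in Aut(F)$ sends $[a,b;c,d]$ to $[a,b;c,d]^\sigma$, and since $O$ is $Aut(F)$-invariant we have $[a,b;c,d]\in O \iff [a,b;c,d]^\sigma\in O$. Moreover, a composition of $O$-preserving permutations is again $O$-preserving, simply by chaining the defining biconditionals. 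Second, since $\primef$ is fixed pointwise by every automorphism of $F$ and $O$ is $Aut(F)$-invariant, the field $K=\primef(O)$ is $Aut(F)$-invariant, so the notion of a $K$-chain is well defined; note also that $K\cup\{\infty\}$ is itself a $K$-chain and contains $0,1,\infty$.

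Now let $C$ be an arbitrary $K$-chain, say $C=g(K\cup\{\infty\})$ with $g\in P\Gamma L_2(F)$. I would then produce $h\in P\Gamma L_2(F)$ sending the three distinct points $f(g(0)),f(g(1)),f(g(\infty))$ to $0,1,\infty$ respectively; such $h$ exists because $P\Gamma L_2(F)\supseteq PGL_2(F)$ acts $3$-transitively on $F\cup\{\infty\}$. Setting $\tilde f:=h\circ f\circ g$, the first observation shows that $\tilde f$ is an $O$-preserving permutation of $F\cup\{\infty\}$, and by construction $\tilde f$ fixes $\{0,1,\infty\}$ pointwise. Hence $\tilde f$ satisfies Hypothesis \ref{H:hyp} with the same set $O$ and the same field $K=\primef(O)$, and the additional square-root hypothesis on $O$ in characteristic $2$ is exactly the one assumed in the corollary.

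Applying Proposition \ref{P:K-to-K} to $\tilde f$ gives $\tilde f(K)=K$; since $\tilde f$ also fixes $\infty$, this yields $\tilde f(K\cup\{\infty\})=K\cup\{\infty\}$. Unwinding the definition $\tilde f=h\circ f\circ g$, I obtain $f(g(K\cup\{\infty\}))=h^{-1}(K\cup\{\infty\})$, that is, $f(C)=h^{-1}(K\cup\{\infty\})$, which is a $K$-chain because $h^{-1}\in P\Gamma L_2(F)$. This is exactly the desired conclusion.

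The genuinely hard content has already been carried out in Proposition \ref{P:K-to-K}; the corollary itself is a reduction, and the only points requiring care are verifying that $P\Gamma L_2(F)$-maps are $O$-preserving (which is where $Aut(F)$-invariance of $O$ is used, explaining why it appears as a hypothesis here but not in the earlier lemmas working under Hypothesis \ref{H:hyp}) and confirming that the normalized map $\tilde f$ genuinely meets Hypothesis \ref{H:hyp}, including the transfer of the characteristic-$2$ square-root condition, which is immediate since $O$ is left unchanged by the normalization.
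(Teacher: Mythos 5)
Your proof is correct and follows essentially the same route as the paper's: both reduce to the normalized situation of Hypothesis \ref{H:hyp} by pre-composing with the chain's defining map $g\in P\Gamma L_2(F)$ and post-composing with an element of $PGL_2(F)$ fixing up $\{0,1,\infty\}$, then invoke Proposition \ref{P:K-to-K}. Your write-up merely makes explicit the ``we may assume'' steps (the map $\tilde f=h\circ f\circ g$ and the unwinding $f(C)=h^{-1}(K\cup\{\infty\})$) that the paper leaves implicit.
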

\begin{proof}
Let $X=T(K\cup\{\infty\})$ be a $K$-chain, where $T\in P\Gamma L_2(F)$. Since $O$ is $Aut(F)$-invariant, $f\circ T$ is also $O$-preserving. Thus we may assume that $X=K\cup\{\infty\}$. Since $PGL_2(F)$ is 3-transitive and preserves the cross-ratio, by composing with an element of $PGL_2(F)$ we may assume that $f$ fixes $\{0,1,\infty\}$ pointwise. By Proposition \ref{P:K-to-K}, $f(K\cup \{\infty\})=K\cup\{\infty\}$ as needed. 
\end{proof}

We recall some definitions from affine geometry. Given a $K$-vector space $V$, an \emph{affine line} is a set of the form $Ka+b$ for some $a,b\in V$ with $a\neq 0$. A map $T:V\to V$ is called \emph{semilinear} if there exists a field automorphism $\sigma \in Aut(K)$ such that for all $v,u\in V$ and $x,y\in K$
\[T(xv+yu)=\sigma(x)T(v)+\sigma(y)T(u).\]

\begin{fact}[The Fundamental Theorem of Affine Geometry]\cite[Theorem 3.5.6]{fundamentalaffine}\label{F:fundamentalaffine}
Let $K$ be any field and $V$ a $K$-vector space of dimension at least $2$. If $f$ a permutation of $V$ sending affine lines to affine lines then there exists a semilinear map $T:V\to V$ and $b\in V$ such that $f(x)=T(x)+b$ for all $x\in V$.
\end{fact}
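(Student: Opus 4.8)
The plan is to reduce to the linear case and then reconstruct the field automorphism geometrically, via the classical ``algebra of throws''. Since $f$ is a bijection carrying lines to lines, a short incidence argument shows it is in fact a collineation: for a line $\ell$ one has $f(\ell)=\ell'$ for a unique line $\ell'$ (if some point of $\ell'$ missed $f(\ell)$, two distinct secants would be forced to coincide), and the same argument applied to $f^{-1}$ shows $f^{-1}$ also preserves lines. Hence $f$ preserves the whole incidence structure, and in particular parallelism, since two distinct lines are parallel exactly when they are coplanar and disjoint, a condition visible to any collineation. Replacing $f$ by $x\mapsto f(x)-f(0)$, I may assume $f(0)=0$; it then suffices to prove this normalized map is semilinear, as the original is recovered by adding back $b=f(0)$.

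Next I would establish additivity, $f(u+v)=f(u)+f(v)$. For linearly independent $u,v$ this is the parallelogram law: $u+v$ is the unique intersection of the line through $u$ parallel to $Kv$ with the line through $v$ parallel to $Ku$. Because $f(0)=0$, the line $Kv$ maps to $Kf(v)$, and preservation of lines and parallelism sends this entire configuration to the analogous one for $f(u),f(v)$, forcing $f(u+v)=f(u)+f(v)$. The collinear case, when $u,v$ lie on a common line through $0$, is then recovered by introducing an auxiliary vector off that line, which exists precisely because $\dim V\ge 2$.

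Now fix linearly independent $e_1,e_2$. Additivity yields additive maps $\sigma_1,\sigma_2:K\to K$ with $f(\lambda e_1)=\sigma_1(\lambda)f(e_1)$ and $f(\mu e_2)=\sigma_2(\mu)f(e_2)$, and hence $f(\lambda e_1+\mu e_2)=\sigma_1(\lambda)f(e_1)+\sigma_2(\mu)f(e_2)$. Evaluating on the diagonal line $K(e_1+e_2)$, whose image is $Kf(e_1+e_2)=K(f(e_1)+f(e_2))$, forces $\sigma_1(\lambda)=\sigma_2(\lambda)$ for all $\lambda$, so there is a single additive $\sigma$. The crux is multiplicativity, where I would use the geometric realization of the product: the line joining $e_2$ to $\lambda e_1$ and the parallel to it through $\mu e_2$ meet the $e_1$-axis exactly at $\lambda\mu\,e_1$. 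Applying $f$ (lines to lines, parallels to parallels) transports this construction to the same one built from $\sigma(\lambda),\sigma(\mu)$, giving $f(\lambda\mu\,e_1)=\sigma(\lambda)\sigma(\mu)f(e_1)$; comparison with $f(\lambda\mu\,e_1)=\sigma(\lambda\mu)f(e_1)$ yields $\sigma(\lambda\mu)=\sigma(\lambda)\sigma(\mu)$. Thus $\sigma$ is a ring homomorphism, and it is bijective (its inverse arising from $f^{-1}$), so $\sigma\in Aut(K)$.

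Finally I would verify that $\sigma$ is independent of the chosen vectors: for any basis $\{e_i\}$, each pair $e_i,e_j$ spans a plane through $0$, so the diagonal argument equates all the associated scalar maps to one $\sigma$. Extending additivity over the basis by induction gives $f\left(\sum_i\lambda_i e_i\right)=\sum_i\sigma(\lambda_i)f(e_i)$, i.e.\ $f$ is $\sigma$-semilinear, and undoing the normalization produces $f(x)=T(x)+b$ with $T$ semilinear. The genuine obstacle is the multiplicativity step: everything preceding it is essentially linear algebra already legible to a collineation, whereas recovering the multiplicative structure of $K$ from incidence data alone is the real content of the theorem, and it is exactly there that the hypothesis $\dim V\ge 2$ (so that an actual affine plane is available to run the product construction) and the preservation of parallelism become indispensable.
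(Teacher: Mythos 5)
First, a point of comparison: the paper does not prove this statement at all --- it is quoted as Fact~\ref{F:fundamentalaffine} with a citation --- so your proposal is not competing with an in-paper argument but supplying the standard textbook proof. Your outline is indeed the classical route (normalize so $f(0)=0$; additivity from the parallelogram configuration; a scalar map $\sigma$ on each axis, identified across axes via the diagonal; multiplicativity of $\sigma$ from the parallel-transport construction of the product), and most of the steps you sketch are correct and fillable in a routine way for $|K|\geq 3$.

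However, there is a genuine gap, and it sits exactly where you wave it away: preservation of parallelism. You justify it by saying two distinct lines are parallel iff they are ``coplanar and disjoint, a condition visible to any collineation.'' But $f$ is only known to send \emph{lines} to lines; that it sends \emph{planes} to planes (which is what ``coplanar'' requires once $\dim_K V\geq 3$) is a separate claim needing its own argument, and that argument genuinely requires $|K|\geq 3$ (one writes the plane through a triangle as the union of lines meeting the triangle's sides in two distinct points, which needs at least $4$ directions through a point). This is not a removable technicality: the Fact as stated, for \emph{any} field, is false over $K=\mathbb{F}_2$ when $\dim_K V\geq 3$. There the affine lines are precisely the $2$-element subsets of $V$, so \emph{every} permutation of $V$ sends affine lines to affine lines, yet not every permutation is affine: for $V=\mathbb{F}_2^3$ one has $|AGL_3(\mathbb{F}_2)|=8\cdot 168=1344<40320=|\mathrm{Sym}(V)|$. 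Consequently no proof of the literal statement can exist; any correct proof must invoke $|K|\geq 3$ somewhere, and your sketch never does --- it is hidden in the coplanarity step (and, less visibly, your product construction degenerates over $\mathbb{F}_2$, where the only scalars are $0,1$). The same caveat applies to the paper's own use of the Fact: condition (4) of Theorem~\ref{T:main} would be satisfied by every permutation if $K=\mathbb{F}_2$ were allowed (every $3$-element subset of $F\cup\{\infty\}$ is an $\mathbb{F}_2$-chain), so the Fact and clause (4) should both carry the hypothesis $|K|\geq 3$; the main chain of implications is unaffected, since the field $K=\primef(O)$ produced in $(3)\Rightarrow(4)$ always has at least $3$ elements (at least $4$ in characteristic $2$).
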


The following theorem is a generalization of the main theorem in \cite{hoffman}.

\begin{theorem}\label{T:main}
Let $\primef \subsetneq F$ be a field, where $\primef$ is the prime field of $F$, and let $f$ be a permutation of $F\cup\{\infty\}$. 

If $\chr(F)=2$ we assume further that 
\begin{enumerate}[(a)]
\item $F$ is perfect and
\item $|F|>4$.
\end{enumerate}

Then the following are equivalent
\begin{enumerate}
\item $f\in P\Gamma L_2(F)$.
\item $f$ is $O$-preserving for all non-empty $O\subseteq F\setminus \{0,1\}$ which is $Aut(F)$-invariant and satisfies $k(O)\subsetneq F$.

\item $f$ is $O$-preserving for some non-empty $O\subseteq F\setminus \{0,1\}$ which is $Aut(F)$-invariant and satisfies $k(O)\subsetneq F$.

\item There exists an $Aut(F)$-invariant subfield $K\subsetneq F$ such that $f$ sends $K$-chains to $K$-chains.
\item For all $Aut(F)$-invariant subfields $K\subsetneq F$, $f$ sends $K$-chains to $K$-chains.
\end{enumerate}
\end{theorem}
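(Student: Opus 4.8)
The plan is to establish the cycle $(1)\Rightarrow(2)\Rightarrow(3)\Rightarrow(4)\Rightarrow(1)$ together with $(1)\Rightarrow(5)\Rightarrow(4)$, which renders all five statements equivalent. Four of these links are routine. For $(1)\Rightarrow(2)$, an element $\frac{ax^\sigma+b}{cx^\sigma+d}\in P\Gamma L_2(F)$ sends $[a,b;c,d]$ to $\sigma([a,b;c,d])$, so since $O$ is $Aut(F)$-invariant, membership in $O$ is preserved both ways; and $(1)\Rightarrow(5)$ holds because $P\Gamma L_2(F)$ permutes the $K$-chains by the very definition of a $K$-chain. The specializations $(2)\Rightarrow(3)$ and $(5)\Rightarrow(4)$ are trivial provided the relevant families are non-empty (for $(5)\Rightarrow(4)$ one may take $K=k$, which is $Aut(F)$-invariant and proper by hypothesis). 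Finally $(3)\Rightarrow(4)$ is precisely Corollary \ref{C:sending K-lines to K-lines} with $K=k(O)$: it is available because when $\chr(F)=2$ perfectness makes the Frobenius an automorphism, so the $Aut(F)$-invariant set $O$ is automatically closed under square-roots. Thus the whole content sits in $(4)\Rightarrow(1)$, which is where I expect the real work.

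For $(4)\Rightarrow(1)$, fix the witnessing $Aut(F)$-invariant subfield $K\subsetneq F$, so that $\dim_K F\ge 2$. Since $PGL_2(F)\le P\Gamma L_2(F)$ is $3$-transitive and sends $K$-chains to $K$-chains, I may compose $f$ with an element of $PGL_2(F)$ and assume $f$ fixes $0,1,\infty$; it then maps the unique $K$-chain through these three points, namely $K\cup\{\infty\}$, to itself. The linchpin is a geometric identification: \emph{a $K$-chain contains $\infty$ if and only if it has the form $\ell\cup\{\infty\}$ for an affine $K$-line $\ell\subseteq F$}. Indeed, writing a chain as $T(K\cup\{\infty\})$ with $T\in PGL_2(F)$, the condition $\infty\in T(K\cup\{\infty\})$ forces $T^{-1}(\infty)\in K\cup\{\infty\}$, and a short computation (either $T$ is affine, or its pole lies in $K$) shows the finite points form an affine $K$-line. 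Since $f(\infty)=\infty$, the map $f$ therefore carries affine lines to affine lines, and by the Fundamental Theorem of Affine Geometry (Fact \ref{F:fundamentalaffine}, applicable as $\dim_K F\ge 2$ and $|K|\ge 3$) the restriction $S:=f\restriction F$ is an additive, $K$-semilinear bijection with $S(1)=1$.

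It remains to upgrade this $K$-semilinear map to a genuine field automorphism of $F$, and this multiplicativity is the subtle heart of the argument. I would exploit the inversion $\iota(x)=1/x\in PGL_2(F)$: the conjugate $g:=\iota\circ f\circ\iota$ again fixes $0,1,\infty$ and sends $K$-chains to $K$-chains, so the same reasoning makes $g\restriction F=:S'$ additive, and for $x\neq 0$ one has $S'(x)=1/S(1/x)$. Feeding the additivity of $S'$ back through this relation and using the additivity of $S$ yields, for all suitable $x,y$, the identity \[S\!\left(\frac{xy}{x+y}\right)S(x+y)=S(x)S(y).\] Specializing $y=1$ and writing $\frac{x}{x+1}=1-\frac{1}{x+1}$ collapses this, via $S(1)=1$ and additivity, to $S(1/z)=1/S(z)$ for every $z\neq 0$. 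Thus $S$ is additive and inversion-preserving with $S(1)=1$, so by Hua's theorem (the circle of ideas behind the Hua identity already used in Proposition \ref{P:K-to-K}) $S$ is a field automorphism $\sigma\in Aut(F)$. Since $f$ also fixes $\infty$, it coincides with the element $x\mapsto x^{\sigma}$ of $P\Gamma L_2(F)$; undoing the normalization gives $f\in P\Gamma L_2(F)$.

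The main obstacle is exactly this final passage from $K$-semilinear to $F$-multiplicative: the Fundamental Theorem of Affine Geometry only sees the $K$-vector-space structure and cannot by itself detect multiplication in $F$, so one must reintroduce it through the projective (inversive) structure that $f$ also preserves. The two further points demanding care are the geometric identification of the chains through $\infty$ with affine lines, and the non-degeneracy hypotheses $\dim_K F\ge 2$ and $|K|\ge 3$ needed for the affine theorem; the latter is guaranteed along the route $(3)\Rightarrow(4)\Rightarrow(1)$ because $K=k(O)$ contains an element $\neq 0,1$, while the characteristic-$2$ assumptions ($F$ perfect, $|F|>4$) are what make Corollary \ref{C:sending K-lines to K-lines} available to begin with.
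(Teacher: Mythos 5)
Your proposal is correct in its essential content and follows the paper's architecture almost exactly: the same implication graph, the same witnesses for the easy arrows, Corollary \ref{C:sending K-lines to K-lines} for $(3)\Rightarrow(4)$, and, for $(4)\Rightarrow(1)$, the same normalization to fix $\{0,1,\infty\}$, the same identification of $K$-chains through $\infty$ with projective affine lines, Fact \ref{F:fundamentalaffine} to get additivity, and conjugation by $x\mapsto 1/x$. The one real divergence is the endgame. The paper extracts multiplicativity by hand from $f\left(\frac{ab}{a+b}\right)=\frac{f(a)f(b)}{f(a)+f(b)}$: first $f(t)f(t^{-1})=1$, then $f(a^2)=f(a)^2$, then a characteristic split (polarization using division by $2$ when $\chr(F)\neq 2$; Hua's identity plus injectivity of Frobenius when $\chr(F)=2$). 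You instead derive inversion-preservation (your computation $S(1/z)=1/S(z)$ is correct) and then invoke Hua's theorem: an additive unital bijection of a division ring preserving inverses is an automorphism or anti-automorphism, hence an automorphism of the commutative $F$. This is legitimate and gives a characteristic-free finish, at the price of citing a stronger classical result whose proof is essentially the computation the paper writes out; the two routes are the same circle of ideas.

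There is, however, one genuine (if small) gap: for $(2)\Rightarrow(3)$ you must actually exhibit an admissible $O$, not merely remark that the family needs to be non-empty. Without a witness, the chain $(1)\Rightarrow(2)\Rightarrow(3)$ establishes nothing, since $(2)$ could hold vacuously. The paper does this explicitly: $O=\primef\setminus\{0,1\}$ when $\chr(F)\neq 2$, and the generators of the four-element subfield when $\chr(F)=2$ --- this is precisely where the hypothesis $|F|>4$ earns its keep. On the other hand, your insistence on $|K|\geq 3$ for the affine theorem is well placed, and sharper than the paper, whose statement of Fact \ref{F:fundamentalaffine} elides this requirement: when $\chr(F)=2$ and $K=\mathbb{F}_2$, every three-element subset of $F\cup\{\infty\}$ is a $K$-chain, so condition $(4)$ holds vacuously for that $K$ and the affine theorem genuinely fails. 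Your observation that along $(3)\Rightarrow(4)\Rightarrow(1)$ one always has $K=\primef(O)$ containing an element outside $\{0,1\}$ is exactly the right repair, and it is the repair the paper implicitly relies on.
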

\begin{remark}
\begin{enumerate}
\item Since we are assuming $O$ is non-empty, if we drop the assumption that $|F|>4$ then $(1)$ might be true even if $(3)$ is not. For example, for $F=\mathbb{F}_4$, the field with $4$ elements, there are no proper intermediate fields between $\mathbb{F}_2$ and $\mathbb{F}_4$, so $(3)$ is not true.
\item The implication $(4)\Rightarrow (1)$ is well known for fields $F$ with $\chr(F)\neq 2$, see \cite[Theorem 9.2.5]{chain}, but we provide a direct proof.
\end{enumerate}
\end{remark}
\begin{proof}
$(1)\Rightarrow (2)$. This is by the definition of $P\Gamma L_2(F)$.

$(2)\Rightarrow (3)$. If $\chr(F)\neq 2$ just take $O=k\setminus \{0,1\}$ (which is non-empty). If $\chr(F)=2$, take elements generating the subfield of $4$ elements.

$(3)\Rightarrow (4)$. Let $K=\primef(O)$. Since $O$ is $Aut(F)$-invariant, $K$ is $Aut(F)$-invariant and if $\chr(F)=2$ then $O$ is closed under taking square-roots since $F$ is perfect and hence the inverse of the Frobenius map is an automorphism. Now apply Corollary \ref{C:sending K-lines to K-lines}.

$(4)\Rightarrow (1)$. By composing with an element of $PGL_2(F)$, we may assume that $f$ fixes $\{0,1,\infty\}$ pointwise. We plan to use Fact \ref{F:fundamentalaffine}, so we must show that, in $F$ as a $K$-vector space, $f\restriction F$ sends affine lines to affine lines. Since $f$ fixes $\{\infty\}$, and sends $K$-chains to $K$-chains, it is sufficient to show the following, where by a \emph{projective affine line} we mean a union of an affine line with $\{\infty\}$,

Recall also that any $K$-chain is equal to some $T(K\cup\{\infty\})$, where $T(x)$ is of the form \[\frac{ax^\sigma+b}{cx^\sigma+d},\] for $a,b,c,d\in F$, with $ad-bc\neq 0$, and $\sigma\in Aut(F)$. Since $K$ is $Aut(F)$-invariant we may assume that $\sigma=id$. 

\begin{claim}
A subset of $F\cup\{\infty\}$ is a $K$-chain which includes $\infty$ if and only if it is a projective affine line.
\end{claim}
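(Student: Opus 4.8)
My plan is to prove both implications directly by computing what the image $T(K\cup\{\infty\})$ looks like for $T(x)=\frac{ax+b}{cx+d}$ with $ad-bc\neq0$ (recall we have already arranged $\sigma=\mathrm{id}$, so $T\in PGL_2(F)$). The easy direction is that a projective affine line is such a chain: given $(Ka+b)\cup\{\infty\}$ with $a\neq 0$, I would simply take $T(x)=ax+b$, which lies in $PGL_2(F)$ since its determinant is $a\neq 0$, fixes $\infty$, and sends $K\cup\{\infty\}$ to $(aK+b)\cup\{\infty\}=(Ka+b)\cup\{\infty\}$ (using that $aK=Ka$ as sets since $K$ is a field). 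This exhibits every projective affine line as a $K$-chain containing $\infty$.

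For the converse, let $X=T(K\cup\{\infty\})$ be a $K$-chain with $\infty\in X$, and split on whether $c=0$. If $c=0$ then $T(x)=\frac{a}{d}x+\frac{b}{d}$ is affine with nonzero leading coefficient $a/d$ (as $ad-bc=ad\neq0$), $T(\infty)=\infty$, and $X=\left(\tfrac{a}{d}K+\tfrac{b}{d}\right)\cup\{\infty\}$ is immediately a projective affine line. If $c\neq0$, then $T(\infty)=a/c\neq\infty$, so the hypothesis $\infty\in X$ forces the pole $p:=-d/c=T^{-1}(\infty)$ to lie in $K$. I would then rewrite, for $k\in K\setminus\{p\}$,
\[T(k)=\frac{ak+b}{ck+d}=\frac{a}{c}+\frac{bc-ad}{c^2(k-p)}=A+\frac{C}{k-p},\]
where $A=a/c$ and $C=\frac{bc-ad}{c^2}\neq0$. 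As $k$ runs over $K\setminus\{p\}$ the quantity $k-p$ runs over $K\setminus\{0\}$, and since $K$ is a field its nonzero elements are closed under inversion, so $\frac{C}{k-p}$ runs over $C\cdot(K\setminus\{0\})$. Adjoining the value $T(\infty)=A$ fills in the missing term $t=0$, giving
\[X=\{T(k):k\in K,\,k\neq p\}\cup\{T(\infty)\}\cup\{\infty\}=(A+CK)\cup\{\infty\}=(KC+A)\cup\{\infty\},\]
an affine line of direction $C\neq0$ together with $\{\infty\}$, i.e. a projective affine line.

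The only genuinely delicate point is the case $c\neq0$: here one must notice that the hypothesis $\infty\in X$ is exactly what guarantees $-d/c\in K$, and that the identity $\{1/(k-p):k\in K,\,k\neq p\}=K\setminus\{0\}$ (which uses that $K$ is a \emph{field}, not merely a subring) is what turns the a priori ``inverted'' set $\{C/(k-p)\}$ back into a translate of the $K$-line $CK$. Note that the coefficients $a,b,c,d$ and hence $A,C$ need not lie in $K$, but this causes no difficulty since $C\neq0$ is all that is required for $KC+A$ to be an affine line. Everything else is the routine partial-fraction computation recorded above.
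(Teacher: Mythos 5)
Your proof is correct. The easy direction coincides with the paper's. For the converse, though, your route is organized differently: the paper first reduces by a translation to $K$-chains containing both $0$ and $\infty$, observes that projective affine lines through $0$ and such chains are each closed under scalar multiplication and under $x\mapsto 1/x$, and then runs a three-case analysis on the shape of $T$ (affine; $b/(cx+f)$; general with $a,c\neq 0$), using these group operations to reduce each case to a previous one. You instead skip any normalization and compute the image outright: splitting only on $c=0$ versus $c\neq 0$, you note that $\infty\in X$ forces the pole $p=-d/c$ into $K$, and the partial-fraction identity $T(k)=A+\frac{C}{k-p}$ together with the fact that inversion permutes $K\setminus\{0\}$ identifies $X$ exactly as $(A+CK)\cup\{\infty\}$. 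The underlying engine is the same in both arguments --- the pole lands in $K$, and $K^\times$ is stable under inversion --- but your version is more explicit and self-contained: it never needs the chain to contain $0$, and it produces the affine line $KC+A$ with its direction and base point visible (and, as you rightly stress, these need not lie in $K$). The paper's version trades that explicitness for brevity, letting the closure operations absorb the computation you carry out by hand.
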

\begin{claimproof}
A projective affine line has the form $a(K\cup\{\infty\})+b$, for $a,b\in F$, so it is a $K$-chain.
For the other direction, by translation it is enough to show that any $K$-chain containing $0$ and $\infty$ is a projective affine line.

Note that projective affine lines that contain $0$ are just of the form $aK$ (for $a\neq 0$), and that both families of projective affine lines containing $0$ and $K$-chains containing $0$ and $\infty$ are closed under scalar multiplication (by non-zero elements from $F$) and inverse ($x\mapsto 1/x$). So it is enough to show that after applying finitely many operations of the form above on a $K$-chain containing $0$ and $\infty$ gives a projective affine line (containing $0$). Assume that we are given a $K$-chain of the form $T(K\cup\{\infty\})$ for $T$ as above which contains $0$ and $\infty$.

\begin{list}{•}{}
\item If $T$ is of the form $ax+b$ with $a\neq 0$, then $b=0$ and we are done.
\item If $T$ is of the form $b/(cx+f)$ with $c\neq 0$, then we are done by the first bullet (after dividing by $b$ and taking inverse).
\item If the $T$ is of the form $\frac{ax+b}{cx+d}$ with $a,c\neq 0$, then after multiplying by $c/a$, we may assume that $a=c=1$, and then since the chain contains $0$, $b\in K$, and since the chain contains $\infty$, $d\in K$. So it is equal to $(K\cup\{\infty\})$. 
\end{list}
%
%
\end{claimproof}

As a result, $f$ preserves the system of affine lines in the  $K$-vector space $F$. Since $K\subsetneq F$, $\dim_K F\geq 2$ so by the fundamental theorem of affine geometry (Fact \ref{F:fundamentalaffine}) and since $f(0)=0$, $f$ must be additive and so also $f(-a)=-f(a)$ for all $a\in F$.

The conjugation of $f$ by the $PGL_2(F)$ map $x\mapsto 1/x$ also satisfies the above, so it is also additive. This translates to \[\frac{f(a)f(b)}{f(a)+f(b)}=f\left( \frac{ab}{a+b}\right),\] for all nonzero $a,b\in F$.
By setting in the equation $a=1$ and $b=t-1$ (for $t\neq 1$) we obtain, $f(t)f(t^{-1})=1$, thus $f$ commutes with inversion.

Putting in the same equation $b=1-a$, for $a\neq 0,1$, we obtain $f(a)f(1-a)=f(a(1-a))$, which gives $f(a^2)=f(a)^2$. 

If $\chr(F)\neq 2$ then, since $f$ is additive, $f(x/2)=f(x)/2$ for all $x\in F$. Set $a=x+y$ in the last equation to get $f(xy)=f(x)f(y)$ for all $x,y\in F$.

If $\chr(F)=2$ we once again use Hua's identity:
\[f(a^2b^2)=f\left(a-(a^{-1}+(b^{-2}-a^{-1})^{-1}\right)\]
\[=f(a)-(f(a)^{-1}+(f(b)^{-2}-f(a)^{-1})^{-1}=f(a^2)f(b^2).\]
Hence $f(ab)^2=\left( f(a)f(b)\right)^2$, so $f(ab)=f(a)f(b)$.

Either way, we get that $f$ is an automorphism of $F$, an in particular $f\in P\Gamma L_2(F)$.

$1\Rightarrow 5$ is clear and $5\Rightarrow 4$ follows by taking $K=k$.
\end{proof}

As a direct corollary of Theorem \ref{T:main} we get the following.

\begin{corollary}\label{C:main}
Let $F$ be a field, $\primef$ its prime field and $\emptyset\neq O\subseteq F\setminus \{0,1\}$ which is $Aut(F)$-invariant. If
\begin{enumerate}
\item $\primef(O)\subsetneq F$ and
\item if $\chr(F)=2$ then $F$ is perfect and $|F|>4$,
\end{enumerate}
then the subgroup of $O$-preserving permutations of $F\cup \{\infty\}$ is exactly $P\Gamma L_2(F)$.
\end{corollary}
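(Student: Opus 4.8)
The plan is to derive this statement immediately from the equivalence of conditions $(1)$ and $(3)$ in Theorem \ref{T:main}, so the bulk of the argument reduces to checking that the hypotheses here supply exactly what that theorem requires. First I would observe that the standing hypothesis $\primef(O)\subsetneq F$, together with the trivial containment $\primef\subseteq \primef(O)$, forces $\primef\subsetneq F$; combined with the assumption that $F$ is perfect with $|F|>4$ when $\chr(F)=2$, this is precisely the running hypothesis on $F$ in Theorem \ref{T:main}. Thus the theorem is applicable to every permutation $f$ of $F\cup\{\infty\}$ under discussion.

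Next I would unwind both inclusions separately. For the inclusion $P\Gamma L_2(F)\subseteq\{O\text{-preserving permutations}\}$: elements of $PGL_2(F)$ preserve the cross-ratio exactly, while the field-automorphism part $x\mapsto x^\sigma$ of a general element of $P\Gamma L_2(F)$ sends a cross-ratio $r$ to $r^\sigma$ (as the cross-ratio is a rational expression with prime-field coefficients, applying $\sigma$ commutes with its formation); since $O$ is $Aut(F)$-invariant we have $r\in O\iff r^\sigma\in O$, so every element of $P\Gamma L_2(F)$ is $O$-preserving. This is exactly the content of $(1)\Rightarrow(2)$ specialized to the given $O$. For the reverse inclusion, let $f$ be any $O$-preserving permutation. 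Because $O$ is non-empty, $Aut(F)$-invariant, and satisfies $\primef(O)\subsetneq F$, the map $f$ witnesses condition $(3)$ of Theorem \ref{T:main}, and the implication $(3)\Rightarrow(1)$ then yields $f\in P\Gamma L_2(F)$.

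Putting these two inclusions together gives the desired equality of groups. The only point worth flagging explicitly is that the $O$-preserving permutations genuinely form a subgroup of the symmetric group on $F\cup\{\infty\}$ — closure under composition and inverse is immediate from the biconditional defining $O$-preservation — so that the phrase ``the subgroup of $O$-preserving permutations'' is well posed. I do not expect any real obstacle in this corollary: all the substantive work lives inside Theorem \ref{T:main}, and the present statement is purely a matter of matching hypotheses and reading off the $(1)\Leftrightarrow(3)$ equivalence.
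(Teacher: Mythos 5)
Your proposal is correct and is essentially the paper's own (implicit) argument: the paper states this as a direct corollary of Theorem \ref{T:main}, and your proof is precisely the intended hypothesis-matching — noting that $\primef\subseteq\primef(O)\subsetneq F$ gives $\primef\subsetneq F$, that the characteristic-$2$ assumptions carry over verbatim, and then reading one inclusion from $(1)\Rightarrow(2)$ and the other from $(3)\Rightarrow(1)$. Nothing is missing.
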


It was shown by Hoffman in \cite{hoffman}, that if $F$ is a field, $a\in F\setminus \{0,1\}$ and $f$ is an $\{a\}$-preserving permutation of $F\cup\{\infty\}$ then $f\in P\Gamma L_2(F)$. One may ask, what about if $f$ preserves a set of cardinality larger than $1$? Theorem \ref{T:main} only gives a partial answer. More specifically, can the assumption $k(O)\subsetneq F$ be dropped? For example:
\begin{question}
Does the subgroup of permutations of $\mathbb{Q}(\sqrt{2})\cup\{\infty\}$ which are $\{\pm\sqrt{2}\}$-preserving properly contain $P\Gamma L_2(\mathbb{Q}(\sqrt{2}))$? 
\end{question}

\section{Every proper extension of $P\Gamma L_2(F)$ is $4$-transitive}
Our final aim is to show that, as a corollary of Theorem \ref{T:main}, any group of permutations of $F\cup\{\infty\}$, for $F$ algebraically closed of transcendence degree at least $1$, which properly contains $P\Gamma L_2(F)$ must be $4$-transitive and as a result does not preserve any non-trivial $4$-relation.

\begin{lemma}\label{L:orbits}
Let $\{O_i\}_{i\in I}$ be the orbits of $Aut(F)$ acting on $F\setminus \{0,1\}$. Then the orbits of $P\Gamma L_2(F)$ acting on quadruples of distinct elements from $F\cup \{\infty\}$ are
\[ \{(a,b,c,d): [a,b;c,d]\in O_i\}_{i\in I}.\]
\end{lemma}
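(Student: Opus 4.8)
The plan is to prove that a certain family of sets is exactly the orbit partition by establishing two complementary facts. Writing $R_i := \{(a,b,c,d) : [a,b;c,d] \in O_i\}$, I will show that (i) each $R_i$ is a union of $P\Gamma L_2(F)$-orbits, and (ii) $P\Gamma L_2(F)$ acts transitively on each $R_i$. Since every quadruple of distinct elements has a cross-ratio lying in exactly one orbit $O_i$ of $\mathrm{Aut}(F)$ on $F \setminus \{0,1\}$, the sets $R_i$ partition the quadruples of distinct elements; combining (i) and (ii) then identifies each $R_i$ as a single orbit, which is the assertion.

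For (i), recall that every $T \in P\Gamma L_2(F)$ factors as $T = T_0 \circ \sigma$, where $T_0 \in PGL_2(F)$ and $\sigma \in \mathrm{Aut}(F)$ acts coordinatewise (extended by $\sigma(\infty) = \infty$). Since the cross-ratio is a rational expression with integer coefficients, applying $\sigma$ to all four entries sends a quadruple of cross-ratio $r$ to one of cross-ratio $\sigma(r)$, that is $[\sigma a, \sigma b; \sigma c, \sigma d] = \sigma([a,b;c,d])$, while $T_0$ preserves the cross-ratio. Hence $T$ carries a quadruple with cross-ratio $r$ to one with cross-ratio $\sigma(r)$, and because each $O_i$ is $\mathrm{Aut}(F)$-invariant we have $r \in O_i$ if and only if $\sigma(r) \in O_i$. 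Thus $R_i$ is $P\Gamma L_2(F)$-invariant.

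For (ii), I would first normalize using the identity $[r,1;0,\infty] = r$ noted in the proof of Corollary \ref{C:permutation-functions}, together with the $3$-transitivity of $PGL_2(F)$. Given $(a,b,c,d) \in R_i$ with $[a,b;c,d] = r$, there is a unique $T_0 \in PGL_2(F)$ with $T_0(b,c,d) = (1,0,\infty)$, and then $T_0(a) = r$ by invariance of the cross-ratio; so every quadruple of cross-ratio $r$ is $PGL_2(F)$-equivalent to $(r,1,0,\infty)$. Now take $(a,b,c,d), (a',b',c',d') \in R_i$ with cross-ratios $r, r' \in O_i$. Choosing $\sigma \in \mathrm{Aut}(F)$ with $\sigma(r) = r'$, the coordinatewise map $\sigma$ fixes $0,1,\infty$ and sends $(r,1,0,\infty)$ to $(r',1,0,\infty)$. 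Composing the two $PGL_2(F)$-normalizations with $\sigma$ produces an element of $P\Gamma L_2(F)$ taking the first quadruple to the second, establishing transitivity on $R_i$.

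The argument is essentially bookkeeping, and I expect no serious obstacle. The only points demanding care are the conventions for $\infty$ in the cross-ratio and the verification of the transformation law $[\sigma a, \sigma b; \sigma c, \sigma d] = \sigma([a,b;c,d])$, including the degenerate cases where one coordinate is $\infty$. The conceptual heart is the reduction of each quadruple to the canonical form $(r,1,0,\infty)$, which isolates the single coordinate on which the residual $\mathrm{Aut}(F)$-action acts, thereby matching the $P\Gamma L_2(F)$-orbits on quadruples with the $\mathrm{Aut}(F)$-orbits on $F \setminus \{0,1\}$.
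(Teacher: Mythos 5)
Your proposal is correct and follows essentially the same route as the paper: invariance of each $R_i$ via the factorization $T\circ\sigma$ together with cross-ratio preservation and $\mathrm{Aut}(F)$-invariance of $O_i$, and transitivity via $3$-transitivity of $PGL_2(F)$ combined with an automorphism matching the cross-ratios. The only (cosmetic) difference is that you normalize both quadruples to the canonical form $(r,1,0,\infty)$ and conjugate $\sigma$ between them, whereas the paper first applies $\sigma$ to equalize the cross-ratios and then maps the first three coordinates directly, pinning down the fourth by the uniqueness property $[a,b;c,D]=[a,b;c,X]\Leftrightarrow D=X$.
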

\begin{proof}
Let $T\circ \sigma\in P\Gamma L_2(F,)$, for $T\in PGL_2(F)$ and $\sigma\in Aut(F)$. Since elements of $PGL_2(K)$ preserve the cross-ratio, and $\sigma(O_i)=O_i$ by definition, $P\Gamma L_2(F)$ preserves the orbits.

Now, let $(x,y,z,w), (a,b,c,d)$ be quadruples of distinct elements such that \[[x,y;z,w],[a,b;c,d]\in O_i.\] By applying an element of $Aut(F)$ we may assume that $[x,y;z,w]=[a,b;c,d]$. 

Since $PGL_2(F)$ is $3$-transitive, there exists $T\in PGL_2(K)$ such that $T(x)=a, T(y)=b, T(z)=c$. So we have that \[[a,b;c,T(w)]=[a,b;c,d].\]
As $a,b,c$ are distinct we have that $T(w)=d$. 
\end{proof}

\begin{theorem}
Let $F$ be an algebraically closed field of transcendence degree at least $1$, and $H$ be a group of permutations of $F\cup\{\infty\}$ properly containing $P\Gamma L_2(F)$. Then $H$ is $4$-transitive.
\end{theorem}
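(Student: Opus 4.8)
The plan is to argue by contradiction. Assume $H$ is not $4$-transitive; the goal is to produce a nonempty $Aut(F)$-invariant set $O\subseteq F\setminus\{0,1\}$ with $k(O)\subsetneq F$ that every element of $H$ preserves, where $k$ is the prime field, and then invoke Corollary \ref{C:main} to conclude $H\subseteq P\Gamma L_2(F)$, contradicting $H\supsetneq P\Gamma L_2(F)$.

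First I would set up the orbit picture. Since $PGL_2(F)\subseteq P\Gamma L_2(F)\subseteq H$ and $PGL_2(F)$ is $3$-transitive, every orbit of $H$ on quadruples of distinct elements is a union of orbits of $P\Gamma L_2(F)$, which by Lemma \ref{L:orbits} are exactly the sets $\{(a,b,c,d):[a,b;c,d]\in O_i\}$ indexed by the $Aut(F)$-orbits $O_i$ on $F\setminus\{0,1\}$. Thus $H$ is $4$-transitive precisely when all the $O_i$ are fused into a single $H$-orbit, and otherwise the $H$-orbits induce a nontrivial partition of $\{O_i\}$ into classes. For any union $O$ of the $O_i$ making up such a class, the corresponding set of quadruples is $H$-invariant, so every $h\in H$ is $O$-preserving, and $O$ is automatically $Aut(F)$-invariant.

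The heart of the matter is a lemma on the orbit structure: for $F$ algebraically closed of transcendence degree at least $1$ over $k$, the group $Aut(F)$ acts transitively on the set of elements transcendental over $k$. To see this, given transcendentals $a,b$ one extends the isomorphism $k(a)\to k(b)$ sending $a\mapsto b$ to the algebraic closures of $k(a),k(b)$ in $F$, and then, choosing a bijection between transcendence bases of $F$ over these two subfields, which have equal (possibly infinite) transcendence degree $\operatorname{trdeg}(F/k)-1$, to an automorphism of $F$ carrying $a$ to $b$. Consequently the transcendentals constitute a single $Aut(F)$-orbit $O_{\mathrm{trans}}=F\setminus\overline{k}$, where $\overline{k}$ is the algebraic closure of $k$ in $F$, while every other $O_i$ is contained in $\overline{k}\setminus\{0,1\}$.

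Assembling these: if $H$ is not $4$-transitive there are at least two classes, and since $O_{\mathrm{trans}}$ lies in only one of them I may choose a class $C$ not containing $O_{\mathrm{trans}}$. Setting $O=\bigcup_{O_i\in C}O_i$ gives a nonempty $Aut(F)$-invariant $H$-preserving set with $O\subseteq\overline{k}\setminus\{0,1\}$, hence $k(O)\subseteq\overline{k}\subsetneq F$ (using $\operatorname{trdeg}(F/k)\geq 1$). Since $F$ is algebraically closed it is perfect and infinite, so the hypotheses of Corollary \ref{C:main} hold, and we conclude that the $O$-preserving permutations are exactly $P\Gamma L_2(F)$; in particular $H\subseteq P\Gamma L_2(F)$, a contradiction. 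The main obstacle is the transitivity lemma on transcendentals (the rest is bookkeeping with cross-ratios and orbits); one must also check the minor points that $\overline{k}\setminus\{0,1\}$ is nonempty, which holds because $\overline{k}$ is always infinite, and that the cardinal arithmetic of transcendence degrees supplies matching transcendence bases when $\operatorname{trdeg}(F/k)$ is infinite.
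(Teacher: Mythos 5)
Your proposal is correct and follows essentially the same route as the paper: use Lemma \ref{L:orbits} to identify the $P\Gamma L_2(F)$-orbits on quadruples with the $Aut(F)$-orbits of cross-ratios, observe that an $H$-orbit avoiding the transcendental orbit yields a nonempty $Aut(F)$-invariant set $O$ of algebraic cross-ratios preserved by $H$ with $k(O)\subseteq\overline{k}\subsetneq F$, and then apply Corollary \ref{C:main} to force $H\subseteq P\Gamma L_2(F)$, a contradiction. The only difference is cosmetic: you spell out the proof that $Aut(F)$ acts transitively on the transcendentals (which the paper asserts implicitly in describing the orbit structure), and your sketch of that standard fact is sound.
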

\begin{proof}
By Lemma \ref{L:orbits}, the action of $P\Gamma L_2 (F)$ breaks the space of quadruples of distinct elements from $F\cup\{\infty\}$ into infinitely many finite orbits (corresponding to finite Galois orbits) and one infinite orbit (corresponding to the Galois orbit of transcendentals).

Thus it is enough to show that every orbit of the action of $H$ on the space of quadruples of distinct elements from $F\cup\{\infty\}$ intersects the orbit corresponding to the transcendentals.

Aiming for a contradiction, assume there exists an orbit $X$ of the action of $H$, which only contains orbits with algebraic cross-ratio, i.e. \[X=\bigcup_{i\in I_0} \{(a,b,c,d):[a,b;c,d]\in O_i\},\] for some $I_0\subseteq I$ and $O_i$ finite, where $I$ and $O_i$ are as in Lemma \ref{L:orbits}. Let $O=\bigcup_{i\in I_0} O_i$ be the cross-ratios arising from quadruples from $X$ and let $K=\primef(O)$, where $\primef$ is the prime field. Note that $K\subsetneq F$ and that $O$ is $Aut(F)$ invariant. By assumption every element of $H$ is $O$-preserving and thus by Corollary \ref{C:main}, $H\subseteq P\Gamma L_2(F)$, contradiction.

\end{proof}

\begin{question}
What about other fields? For instance, is it true that every group of permutations of $\mathbb{Q}(\sqrt{2})\cup\{\infty\}$ properly containing $P\Gamma L_2(\mathbb{Q}(\sqrt{2}))$ must be $4$-transitive?
\end{question}
\bibliographystyle{alpha}
\bibliography{PGammaL2}

\end{document}